\documentclass[12pt]{article}
\usepackage{latexsym,color,amsmath,amsthm,amssymb,amscd,amsfonts}
\usepackage{scalefnt}
\usepackage[font=small,labelfont=bf]{caption}
\usepackage{float}
\usepackage{graphicx}
\usepackage{amsopn}
\usepackage{relsize}
\usepackage{mathrsfs}
\usepackage{upgreek}
\usepackage{tikz}
\usepackage{subcaption}
\usepackage{caption} 
\usetikzlibrary{calc}

\newtheoremstyle{nonum}{}{}{\itshape}{}{\bfseries}{.}{ }{\thmnote{#3}}

\newtheorem{thm}{Theorem}[section]
\newtheorem*{thm*}{Theorem}

\newtheorem{cor}[thm]{Corollary}
\newtheorem{lem}[thm]{Lemma}
\newtheorem{prop}[thm]{Proposition}
\newtheorem{rem}[thm]{Remark}

\newtheorem*{definition*}{Definition}

\newtheorem*{rems*}{Remarks}

\theoremstyle{nonum}

\newtheorem*{definition-recall}{Definition \ref{def:klens}}

\newcommand{\R}{\mathbb R}
\newcommand{\RR}{\mathbb R}

\newcommand{\N}{\mathbb N}

\def\K{{\cal K}}

\def\S{{\cal S}}

\newcommand{\iprod}[2]{\langle #1,#2 \rangle} 

\def\K{{\cal K}}

\def\vol{{\rm Vol}}

\def\eps{{\varepsilon}}

\def\conv{{\rm conv}}
\def\cconv{{\rm conv}_c}
\def\outrad{{\rm Outrad}}

\begin{document}
\title{Symmetrizations of Ball-Bodies}
\date{}
\author{S. Artstein-Avidan\thanks{Support for this work was provided by the ISF grant number 1626/25.} \and D.I. Florentin\footnotemark[1]}
\maketitle
\begin{abstract}
We study symmetrization procedures within the class $\mathcal S_n$ of \emph{ball-bodies}, i.e.\ intersections of unit Euclidean balls (equivalently, summands of the Euclidean unit ball, or $c$-convex sets via the $c$-duality $A\mapsto A^c$). We first examine linear parameter systems obtained by replacing the usual convex hull by the $c$-hull $A^{cc}$, deriving consequences for volume along these $c$-paths.  In particular, we obtain convexity statements in special cases and in dimension $2$, and we show by example that such convexity fails in general for $n\ge 3$. We then focus on Steiner symmetrization. We prove that Steiner symmetrization increases the \emph{dual volume} and that in the planar case Steiner symmetrals of ball-bodies remain ball-bodies. In contrast, we provide an explicit example in $\RR^3$ showing that the Steiner symmetral of a ball-body need not belong to $\mathcal S_n$, and show that there are such counter-examples with arbitrarily large curvatures.
\end{abstract}

\section{Introduction and preliminaries}\label{sec:first}

The class $\S_n$ of ball-bodies in $\RR^n$, discussed in \cite{AFpreprint}, can be defined in various equivalent forms. It consists of bodies which have generalized sectional curvatures at least $1$ at every boundary point, and this is equivalent to these sets being summands of the Euclidean ball $B(0,1)$. In other words, for every $K\in \S_n$ there corresponds some $L\in \S_n$ with $K+L = B(0,1)$. Equivalently, the class can be defined as the image class of the order reversing quasi involution on subsets of $\RR^n$, taking a set $A\subseteq \RR^n$ to its so called ``$c$-dual set'' given by 
\[ A^c = \bigcap_{x\in A} B(x,1). \]
This class has recently received considerable attention, 
and is connected with an array of problems and conjectures  such as Borisenko's conjecture \cite{BorisenkoDrach2014,ChernovDrachTatarko2019,Drach2016,drach2023reverse} and the Blaschke-Lebesgue problem  \cite{blaschke1915breite,lebesgue1914isoperimetres,MartiniMontejanoOliveros2019}, see \cite{AFpreprint} for more details. 

It is not hard to show (see \cite{AFpreprint}) that for any $A\subset \R^n $ we have $A\subseteq A^{cc}$ which, (together with order reversion) in particular implies that on the image, namely on sets which are intersections of translates of the Euclidean unit ball,  the mapping $A\mapsto A^c$ is an order reversing involution (see \cite{artstein2023zoo}). It is also an isometry with respect to the Hausdorff metric, and up to rigid motions no other isometry, except the identity, exists \cite{ACF1, ACF2}. 
We call the smallest element in $\S_n$ which includes a set $A$ the $c$-hull of $A$ and note that it is simply given by 
$\cconv (A) : = A^{cc}$. 
We will make use of the following near-linearity of the $c$-duality, which is immediate when one realizes for $K\in\S_n$ its $c$-dual set is   $K^c = -L$
where $K+L = B(0,1)$. 
Let $K,T\in \S_n\setminus \{\emptyset, \RR^n\}$,  and $\lambda \in (0,1)$. Then $(1-\lambda)K + \lambda T\in \S_n$, and
\begin{equation}\label{eq:lin}
((1-\lambda)K + \lambda T)^c = (1-\lambda)K^c + \lambda T^c.
\end{equation}
Moreover,  even if $K,T\subset \R^n$ 
are not assumed to be in $\S_n$, we still have 
 \begin{equation}\label{eq:minkowski-sum-c-is-sub-linear}
    (1-\lambda)K^c + \lambda T^c
    \subseteq
    ((1-\lambda)K + \lambda T)^c, 
\end{equation}   
which follows from the definition of the dual, since $K\subseteq B(x,1)$ and $T\subseteq B(y,1)$ implies $(1-\lambda)K + \lambda T \subseteq B((1-\lambda)x+\lambda y, 1)$. 

The tool of Steiner symmetrization, and more generally of linear parameter systems, is very useful in convexity, see e.g. \cite[Section 1.1.7]{AGMBook}. In this note  we gather information regarding the application of this process in the context of $\S_n$. It should be mentioned that many variants of symmetrizations exist, in particular Minkowski symmetrizations and some in-between hybrids, see \cite{BianchiGardnerGronchi}. Some of the results in this note apply to these inbreeds as well, but we discuss mainly the extremal ones - Minkowski and mainly Steiner. 

After recalling some tools as preliminaries, we discuss the concept of linear parameter systems when one replaces the usual convex hull with the $c$-hull. We show some convexity properties of the volume in special cases, as well as in dimension $n=2$, and explain that in dimension $n\ge3$ convexity of the volume of the $c$-hull fails in general. 

In Section \ref{sec:Steiner} we turn to Steiner symmetrization within $\S_n$: we show that it increases the dual volume, prove that in the planar case Steiner symmetrization preserves the class $\S_2$, and then give an explicit lens example in $\RR^3$ showing that Steiner symmetrization need not preserve $\S_3$ (equivalently, it may produce boundary points with sectional curvature $<1$). We proceed to showing that in fact even in the subclass of bodies in $\RR^n$ $n\ge 3$ which are intersections of $r$-balls for arbitrarily small $r>0$, we can find an element and a Steiner symmetrization which maps it to a body which is not a ball-body.

\section{Symmetrizations of Ball-Bodies}

Dealing with a certain class, it is natural to search for simple operations on the class which, applied sequentially, say, take a set to be more symmetric, which could mean invariant under certain linear transformations, or, usually, closer to a ball.  

A very simple example which works for ball bodies is the operation of Minkowski symmetrizations (see \cite[Section 1.5.5]{AGMBook}.
For $K\subset \R^n$ and $u\in S^{n-1}$, the reflection of $K$ with respect to the hyperplane $u^\perp$ is denoted by $R_u K$. The Minkowski symmetral $M_u K$ of $K$ with respect to $u$, is defined to be the Minkowski average of $K$ and its reflection, i.e. $M_u K=\frac12K + \frac12 R_u K$. Clearly, the Brunn-Minkowski inequality implies that $\vol(M_u K) \ge \vol(K)$. Minkowski symmetrizations commute with $c$-duality, and in particular they preserve the class of ball-bodies, that is

\begin{lem}\label{lem:closed-under-Mink}
Let $u\in S^{n-1}, K\in \S_n$. Then $M_u K\in\S_n$, and $M_u(K^c) = (M_u K)^c$. Thus the notation $M_u K^c$ is unambiguous, and moreover $\vol(M_u K^c) \ge \vol(K^c)$.
\end{lem}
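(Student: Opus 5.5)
The plan is to reduce everything to the near-linearity \eqref{eq:lin} together with the fact that reflection commutes with $c$-duality. First I will dispose of the degenerate cases $K=\emptyset$ and $K=\RR^n$. In each, $R_uK=K$, so $M_uK=\frac12K+\frac12K=K$, and all claims are trivial. (Here we use the conventions $\emptyset+\emptyset=\emptyset$ and $\RR^n+\RR^n=\RR^n$, with $\emptyset^c=\RR^n$ and $(\RR^n)^c=\emptyset$.)

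So assume $K\in\S_n\setminus\{\emptyset,\RR^n\}$.

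\textbf{Step 1: $R_uK\in\S_n$ and $(R_uK)^c=R_u(K^c)$.} The reflection $R_u$ is a linear isometry with $R_u B(x,1)=B(R_ux,1)$. Hence
\[
(R_uA)^c=\bigcap_{x\in A}B(R_ux,1)=R_u\Big(\bigcap_{x\in A}B(x,1)\Big)=R_u(A^c).
\]
This holds for every set $A$. In particular, if $K=A^c$ then $R_uK=(R_uA)^c\in\S_n$.

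\textbf{Step 2: apply \eqref{eq:lin}.} Take $\lambda=\frac12$, with $K$ and $T=R_uK$, both of which lie in $\S_n\setminus\{\emptyset,\RR^n\}$. Then $M_uK=\frac12K+\frac12R_uK\in\S_n$, and
\[
(M_uK)^c=\tfrac12K^c+\tfrac12(R_uK)^c=\tfrac12K^c+\tfrac12R_u(K^c)=M_u(K^c).
\]
This gives the commutation, and hence the unambiguity of the notation $M_uK^c$.

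\textbf{Step 3: the volume inequality.} Since $K^c$ is convex (an intersection of balls), Brunn--Minkowski gives
\[
\vol(M_uK^c)^{1/n}\ge \tfrac12\vol(K^c)^{1/n}+\tfrac12\vol(R_uK^c)^{1/n}=\vol(K^c)^{1/n},
\]
where the last equality uses that $R_u$ preserves volume.

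The only step requiring care is Step 1. That is where the membership $R_uK\in\S_n$ must be checked, since it is needed to invoke \eqref{eq:lin}; once it holds, the rest is immediate.
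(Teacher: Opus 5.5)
Your proof is correct and follows the argument the paper intends. The paper gives no separate proof of this lemma; it relies on the near-linearity \eqref{eq:lin}, the isometry invariance $(UA)^c=U(A^c)$ (used explicitly in the proof of Theorem \ref{thm:steiner-increases-dual-volume}), and Brunn--Minkowski, which are exactly your Steps 1--3, and your handling of the degenerate cases $\emptyset,\RR^n$ and of $R_uK\in\S_n$ (needed to invoke \eqref{eq:lin}) adds appropriate care.
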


In particular, one may use Lemma \ref{lem:closed-under-Mink} to prove that among all bodies $K\in \S_n$ with fixed mean-width $w(K) = \int_{S^{n-1}} h_K(u)d\sigma(u)$ (here $\sigma$ is the normalized Haar measure on the sphere $S^{n-1}$), the one with maximal volume is the Euclidean unit ball, and the one with maximal dual volume is {\em also} the Euclidean unit ball. The first of these two facts is true also for any $K\in \K^n$ (all convex bodies) due to Urysohn's inequality, \[ \left(\vol(K)/\vol(B_2^n)\right)^{1/n} \le w(K)/w(B_2^n).  \]
However the inequality for dual volume is specialized to this class both because the $c$-duality is relevant only for this class, and because $w(K^c)  = 1- w(K)$, again by the linearity, which is very much not the case for usual polarity. 

\begin{cor}\label{cor:santalofixedwidth}
Let $A\subset \RR^n$ with out-radius at most $1$, and let $B(0,r)$ be the Euclidean unit ball with the same mean-width as $\cconv (A)$. Then  
\begin{equation}\label{eq:santalo-fixed-vol}
  \vol(A^c) \le   \vol(B(0,1-r)) = (1-r)^n \kappa_n.
\end{equation} 	 
\end{cor}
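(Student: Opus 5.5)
Write $K=\cconv(A)=A^{cc}$. Since $A\subseteq A^{cc}$ and $c$-duality reverses order, $A^{ccc}\subseteq A^c$. Applying the inclusion $A\subseteq A^{cc}$ to the set $A^c$ gives $A^c\subseteq A^{ccc}$, so $A^c=K^c$. The out-radius assumption guarantees that $A^c\neq\emptyset$, and hence $K\in\S_n\setminus\{\emptyset,\RR^n\}$. If $A=\emptyset$ the statement is degenerate, so we assume $A\neq\emptyset$. It therefore suffices to prove the following: for $K\in\S_n$ with $w(K)=w(B(0,r))=r$, we have $\vol(K^c)\le (1-r)^n\kappa_n$.

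We use the mean-width relation $w(K^c)=1-w(K)$. This follows from $K+L=B(0,1)$ and $K^c=-L$: support functions add, so $h_K+h_L\equiv 1$, and integrating over the sphere gives $w(K)+w(-L)=1$, since $w(-L)=w(L)$. Thus $w(K^c)=1-r$.

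We then apply Urysohn's inequality to the convex body $K^c$:
\[
\left(\vol(K^c)/\kappa_n\right)^{1/n}\le w(K^c)/w(B_2^n)=1-r .
\]
This gives $\vol(A^c)=\vol(K^c)\le(1-r)^n\kappa_n$, as claimed. One should check the normalization. With $w(K)=\int h_K\,d\sigma$ we get $w(B_2^n)=1$, so the ball of mean width $r$ is indeed $B(0,r)$, which is consistent.

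If instead one wants to follow the route suggested by Lemma \ref{lem:closed-under-Mink}, the argument runs as follows. Choose a sequence of Minkowski symmetrizations $M_{u_k}\cdots M_{u_1}K$ that converges in the Hausdorff metric to the ball $B(0,r)$; this is standard, and it preserves mean width. By the lemma each step stays in $\S_n$ and increases the dual volume. Since $c$-duality is a Hausdorff isometry on $\S_n$, the duals converge to $B(0,r)^c=B(0,1-r)$, and continuity of volume finishes the proof.

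The only delicate points are the identification $A^c=(\cconv A)^c$ and the nondegeneracy of $K$, which is needed to use the linearity relation. Both are handled above, so no real obstacle remains.
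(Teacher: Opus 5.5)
Your proof is correct and matches the paper's own reasoning, which is only sketched in the paragraph before the corollary. There the bound is obtained from Lemma \ref{lem:closed-under-Mink} via Minkowski symmetrizations converging to $B(0,r)$, and the paper points to the identity $w(K^c)=1-w(K)$, which together with Urysohn's inequality for $K^c$ is exactly your main argument; your reduction $A^c=(\cconv A)^c$ and the check $K\in\S_n\setminus\{\emptyset,\RR^n\}$ fill in details the paper leaves implicit.
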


Minkowski symmetrization is a useful operation, but somewhat ``too linear'' for many applications. It is the {\em Steiner} symmetrization (see \cite[Section 1.1.7]{AGMBook} which is mostly used in Asymptotic Convex Geometry. We recall the definition here, and show some simple properties which hold in general, and get back to it in Section \ref{sec:Steiner}.  Fix a unit vector $u\in S^{n-1}$. 
The Steiner symmetrization $S_u(K)$ of a convex body $K$ with respect to the hyperplane $u^\perp$ is defined as follows. For any $x$ in the projection $P_{u^\perp}(K)$ of $K$ to $u^\perp$, we denote $K\cap\left(x+\R u\right)=[x+a(x)u,x+b(x)u]$ for ``the segment above $x$'', of length $|b(x)-a(x)|$. The Steiner symmetral of $K$ with respect to $u$ is defined to be
\begin{equation*}
S_u(K)=\left\{(x,y)\in u^\perp \times \RR\,:\, x\in P_{u^\perp}(K),\,|y|\le\frac{|b(x)-a(x)|}{2} \right\}.
\end{equation*}

\begin{thm}\label{thm:steiner-increases-dual-volume}
	Let $K\subset \RR^n$ be convex and let $u\in S^{n-1}$. Then 
	\[ S_u (K^c) \subseteq \cconv(S_u (K^c))\subseteq  (S_u K )^c.\]
	In particular, Steiner symmetrizations increase $c$-dual volume; $\vol(K^c) \le 	\vol((S_u K)^c)$.
\end{thm}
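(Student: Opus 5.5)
The plan is to reduce everything to the outer inclusion $S_u(K^c)\subseteq (S_uK)^c$. The middle term then comes for free. The set $(S_uK)^c$ is a $c$-dual set, hence an intersection of unit balls, so it is $c$-convex, i.e.\ it equals its own $c$-hull. Since $\cconv(\cdot)=(\cdot)^{cc}$ is the smallest element of $\S_n$ containing a set and is monotone, $S_u(K^c)\subseteq(S_uK)^c$ gives $\cconv(S_u(K^c))\subseteq ((S_uK)^c)^{cc}=(S_uK)^c$. The first inclusion $S_u(K^c)\subseteq \cconv(S_u(K^c))$ is just $A\subseteq A^{cc}$. If $K^c=\emptyset$ there is nothing to prove, so assume it is nonempty. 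Then $K^c$ is a compact convex set, so its Steiner symmetral makes sense.

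By definition of the $c$-dual, showing $S_u(K^c)\subseteq(S_uK)^c$ amounts to checking that $|p-q|\le 1$ for every $p\in S_u(K^c)$ and $q\in S_uK$. Write $p=(x,s)$ and $q=(y,t)$ in $u^\perp\times\RR$. Let the fibre of $K^c$ over $x$ be $[a',b']$ and the fibre of $K$ over $y$ be $[a,b]$. Then $|s|\le (b'-a')/2$ and $|t|\le (b-a)/2$, so
\[ |s-t|\le m:=\tfrac12\big((b'-a')+(b-a)\big). \]
We therefore need $|x-y|^2+m^2\le 1$.

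The key step uses the endpoints of the two fibres. We have $x+b'u,\ x+a'u\in K^c$ and $y+au,\ y+bu\in K$. Every point of $K^c$ is within distance $1$ of every point of $K$, so
\[ |x-y|^2+(b'-a)^2\le 1,\qquad |x-y|^2+(b-a')^2\le 1. \]
The two quantities $b'-a$ and $b-a'$ sum to exactly $2m$. By convexity of $r\mapsto r^2$,
\[ m^2\le \tfrac12\big((b'-a)^2+(b-a')^2\big). \]
Averaging the two inequalities then gives $|x-y|^2+m^2\le 1$, and hence $|p-q|\le1$. This pairing of opposite endpoints (top of one fibre with bottom of the other) is the only real idea. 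I expect it to be the main point to get right, since pairing the endpoints naively does not directly give the symmetric bound.

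Finally, Steiner symmetrization preserves volume by Fubini, since each fibre is replaced by a centred segment of the same length. Hence
\[ \vol(K^c)=\vol(S_u(K^c))\le \vol((S_uK)^c), \]
which is the stated monotonicity of the $c$-dual volume.
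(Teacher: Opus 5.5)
Your proof is correct. It reaches the key inclusion $S_u(K^c)\subseteq (S_uK)^c$ by a direct fibrewise computation. The paper never looks at individual fibres; instead it factors through Minkowski symmetrization. It uses $S_u(A)\subseteq M_u(A)$ for convex $A$, then the sub-linearity \eqref{eq:minkowski-sum-c-is-sub-linear} together with $R_u(K^c)=(R_uK)^c$ to get $M_u(K^c)\subseteq (M_uK)^c$, and finally order reversal. This gives the chain $S_u(K^c)\subseteq M_u(K^c)\subseteq (M_uK)^c\subseteq (S_uK)^c$.

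The underlying mechanism is the same. Unwound pointwise, the paper writes $p=\frac12p_1+\frac12R_up_2$ and $q=\frac12q_1+\frac12R_uq_2$ with $p_i\in K^c$, $q_i\in K$, and bounds $|p-q|\le\frac12|p_1-q_1|+\frac12|p_2-q_2|\le1$. In the extremal configuration $|s-t|=m$, the pairs $(p_1,q_1)$ and $(p_2,q_2)$ are exactly your $(x+b'u,\,y+au)$ and $(x+a'u,\,y+bu)$. The only change is that you replace the triangle inequality by Pythagoras plus convexity of $r\mapsto r^2$.

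Your version is self-contained: it needs neither $S_u\subseteq M_u$ nor the sub-linearity, and it makes the witnessing pairs explicit. The paper's version gives the intermediate inclusion $M_u(K^c)\subseteq (M_uK)^c$ for free. It also applies verbatim to any operation $T$ with $T(A)\subseteq M_u(A)$, such as the Steiner--Minkowski hybrids (fibre symmetrizations along subspaces containing $u$) alluded to in the introduction. There the fibres are no longer segments, so your endpoint pairing is not available as stated.

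Two cosmetic points. For $K=\emptyset$ one has $K^c=\RR^n$, which is not compact, though the claim is then trivial. For non-closed $K$ the fibre endpoints lie in $\overline K$, which suffices since $K^c=(\overline K)^c$.
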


\begin{proof}
We will use Minkowski symmetrization, first noting that for any convex $K$, one has $S_u (K) \subseteq M_u (K)$. Indeed,
in $S_u(K)$ the segment above a given $x\in P_{u^\perp}(K)$ is the Minkowski average of the segments above $x$ of $K$ and its reflection, which is in $M_u(K)$. Secondly, using   \eqref{eq:minkowski-sum-c-is-sub-linear} we see that 
\[
M_u (K^c) = 
\frac{K^c + R_u(K^c)}2 = 
\frac{K^c + (R_u K)^c}2 \subseteq 
\left(\frac{K + R_u K}2 \right)^c =
(M_u K)^c
\]
(in the second equality we use that $(U A )^c = U (A^c)$ for any isometry $U$). 
Joining these two facts, and the fact that $c$-duality reverses inclusion we get 
\[
S_u (K^c) \subseteq
M_u (K^c) \subseteq
(M_u K)^c \subseteq
(S_u K)^c. 
\]
Since the right hand side belongs to $\S_n$, inclusion remains also after taking the $c$-hull of the left hand side, which completes the proof. 
\end{proof}

We can  deduce a Santal\'{o}-type inequality, although it is formally weaker than Corollary \ref{cor:santalofixedwidth}. 

\begin{cor}\label{cor:santalofixedvol}
Let $A\subset \RR^n$ with out-radius at most $1$, and let $B(0,r)$ be the Euclidean unit ball with the same volume as $\conv (A)$. Then  
\begin{equation}\label{eq:santalo-fixed-vol2}
  \vol(A^c) \le   \vol(B(0,1-r)) = (1-r)^n \kappa_n.
\end{equation} 	 
\end{cor}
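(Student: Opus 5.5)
We reduce to the case $K=\conv(A)$ and apply Theorem \ref{thm:steiner-increases-dual-volume} along a sequence of Steiner symmetrizations converging to a ball. The $c$-dual of a set depends only on its convex hull: since every ball $B(x,1)$ is convex, $A\subseteq B(x,1)$ if and only if $\conv(A)\subseteq B(x,1)$. Hence $A^c=(\conv A)^c$, and we may assume $A=K$ is a convex body with out-radius at most $1$. If $K$ has empty interior, then $r=0$, and the claim $\vol(K^c)\le\kappa_n$ holds because $K^c\subseteq B(x,1)$ for any $x\in K$. So assume $K$ has nonempty interior and $\vol(K)=\vol(B(0,r))$.

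By a classical fact (see \cite[Section 1.1.7]{AGMBook}), there is a sequence of directions $u_1,u_2,\dots$ such that $K_m:=S_{u_m}\cdots S_{u_1}K$ converges in the Hausdorff metric to $B(0,r)$. Steiner symmetrization preserves volume, so the limit ball has volume $\vol(K)$ and is therefore exactly $B(0,r)$. Applying Theorem \ref{thm:steiner-increases-dual-volume} at each step, with $K_{m-1}$ convex, gives
\[ \vol(K^c)\le \vol(K_1^c)\le\cdots\le\vol(K_m^c). \]
We must also check that the $K_m$ keep out-radius at most $1$, so that all the duals are nonempty. Steiner symmetrization does not increase out-radius: if $K\subseteq B(z,1)$, then $S_uK\subseteq S_uB(z,1)=B(P_{u^\perp}z,1)$. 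This holds because symmetrization is monotone with respect to inclusion, and $S_u$ of a ball is a translated ball.

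It remains to pass to the limit and show $\limsup_m \vol(K_m^c)\le\vol(B(0,r)^c)$. A direct computation gives $B(0,r)^c=B(0,1-r)$, so this would complete the proof. The key step, and the main obstacle, is this upper semicontinuity. The cleanest route is to use that $c$-duality is an isometry for the Hausdorff metric. That statement is made for sets in $\S_n$, so we first replace $K_m$ by $\cconv(K_m)$, which has the same dual since $A^c=A^{ccc}$. Then we need $\cconv(K_m)\to\cconv(B(0,r))=B(0,r)$.

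A more elementary argument avoids this. Since $K_m\to B(0,r)$, for every $\eps>0$ and all large $m$ we have $K_m\supseteq B(0,r-\eps)$. Duality reverses inclusion, so $K_m^c\subseteq B(0,r-\eps)^c=B(0,1-r+\eps)$. Hence $\vol(K_m^c)\le(1-r+\eps)^n\kappa_n$, and letting $\eps\to0$ gives \eqref{eq:santalo-fixed-vol2}.

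The inclusion $K_m\supseteq B(0,r-\eps)$ is the step to verify carefully. It follows from Hausdorff convergence of convex bodies whose limit has nonempty interior: if some point $p$ with $|p|\le r-\eps$ were outside $K_m$, separate $p$ from $K_m$ by a hyperplane. The half of $B(0,r)$ beyond that hyperplane contains a point at distance at least $\eps$ from $K_m$, contradicting $d_H(K_m,B(0,r))<\eps$.
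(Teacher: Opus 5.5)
Your proposal is correct and follows the same route as the paper: Steiner-symmetrize $\conv(A)$ along a sequence converging to $B(0,r)$, and use Theorem \ref{thm:steiner-increases-dual-volume} to see that the dual volume can only increase along the way. The paper leaves implicit the reduction $A^c=(\conv A)^c$ and the final limiting step; you supply both correctly, the latter via $K_m\supseteq B(0,r-\eps)$ for large $m$, which gives $K_m^c\subseteq B(0,1-r+\eps)$.
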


\begin{proof}
One may find a sequence of Steiner symmetrizations of $\conv(A)$ which converges to a ball of the same volume (see e.g. \cite[Theorem 1.1.16]{AGMBook}). Using Theorem \ref{thm:steiner-increases-dual-volume}, the volume product is increasing along the sequence, which completes the proof.
\end{proof}

Before moving on to linear parameter systems, which will allow us in particular to work in more detail with with Steiner symmetrization, we mention yet another symmetrization that was used in the literature, also for the class $\S_n$. 
In \cite{Bezdek2018} 
Bezdek proves a fact similar to Corollary \ref{cor:santalofixedvol} using a symmetrization called ``two-point symmetrization''. To describe it, denote for an affine hyperplane $H$ define the operation of reflection with respect to $H$ by $R_H$.  The two-point symmetral of $K$ with respect to $H$ is
\[ \tau_H (K) = (K\cap R_H(K)) \cup ((K\cup R_H(K))\cap H^+). \]
It is easy to check that $K$ and $\tau_H(K)$ have the same volume (but convexity, as well as $c$-convexity, need not be preserved). 
\begin{thm}[Bezdek]
	If $K\subset \RR^n, n>1$  then 
	\[ \tau_H(K^c) \subseteq
    {\rm conv}_c \left(\tau_H(K^c)\right) \subseteq
    (\tau_H (K))^c. \] 
	In particular, among all compact sets of a given volume, the ball has the largest (in volume) $c$-dual. 
\end{thm}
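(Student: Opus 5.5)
The key claim is $\tau_H(K^c)\subseteq(\tau_H K)^c$. Once we have it, the middle inclusion follows because $(\tau_H K)^c$ is an intersection of unit balls and hence lies in $\S_n$; the $c$-hull of the left side is then also contained in it, by minimality of $\cconv$. The first inclusion $A\subseteq A^{cc}$ holds for any set. So I will prove the key claim by unpacking the definitions pointwise. Write $R=R_H$, let $H^+$ be the closed half-space kept by $\tau_H$, and write $H^-$ for the other side. Take $y\in\tau_H(K^c)$ and $x\in\tau_H(K)$; the goal is $|x-y|\le 1$.

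The basic tool is a reflection inequality. For $a,b\in H^+$ we have $|a-Rb|\ge|a-b|$, since reflection across $H$ moves $b$ further from any point on the same side. Because $R$ is an isometry and an involution, this also gives $|Ra-Rb|=|a-b|$ and $|Ra-b|=|a-Rb|$.

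Next I split into cases according to the description of $\tau_H$. Either $y\in K^c\cap R(K^c)$, or $y\in H^+\cap(K^c\cup R(K^c))$; likewise for $x$ with $K$.
\begin{itemize}
\item If $y\in K^c\cap RK^c$, then $y$ is within distance $1$ of every point of $K\cup RK$. Since $\tau_H K\subseteq K\cup RK$, we are done. The case $x\in K\cap RK$ is symmetric, because then both $x$ and $Rx$ lie in $K$.
\item The remaining case has $y\in H^+$ with $y\in K^c$ or $Ry\in K^c$, and $x\in H^+$ with $x\in K$ or $Rx\in K$.
\end{itemize}
In the remaining case there are four subcases:
\begin{itemize}
\item $y\in K^c$ and $x\in K$: immediately $|x-y|\le1$.
\item $Ry\in K^c$ and $Rx\in K$: then $|x-y|=|Rx-Ry|\le1$.
\item $y\in K^c$ and $Rx\in K$: here $|x-y|\le|Rx-y|\le 1$, using the reflection inequality with $x,y\in H^+$.
\item $Ry\in K^c$ and $x\in K$: here $|x-y|\le|x-Ry|\le1$, in the same way.
\end{itemize}
This proves the inclusion.

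For the volume statement, I need volume preservation $\vol(\tau_H A)=\vol(A)$, noted earlier. Applying it to $A=K^c$ and using the inclusion gives $\vol(K^c)\le\vol((\tau_H K)^c)$. To conclude that the ball has the largest $c$-dual among compact sets of given volume, I would invoke the known fact that suitable sequences of two-point symmetrizations of a compact set converge in the Hausdorff metric to a ball of the same volume. Volume is only upper semicontinuous under Hausdorff limits, so some care is needed; I would handle it by working with closures and using that $c$-duality is a Hausdorff isometry on $\S_n$, so the dual volumes converge to that of $B(0,1-r)$.

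The main obstacle is this limiting step: the volumes of the symmetrized sets need not converge nicely for general compact sets. The pointwise inclusion itself is routine. My first attempt at the limit would be to restrict to the $c$-hulls and appeal to Hausdorff continuity of volume on convex bodies. Alternatively, one could simply cite the convergence result as in Bezdek's work.
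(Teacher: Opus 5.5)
The paper does not prove this theorem; it only quotes it from Bezdek. Your argument is correct. It follows the same template the paper uses for the Steiner analogue (Theorem \ref{thm:steiner-increases-dual-volume} and Corollary \ref{cor:santalofixedvol}):
\begin{itemize}
\item prove $\tau_H(K^c)\subseteq(\tau_H K)^c$;
\item pass to the $c$-hull, since the right side lies in $\S_n$;
\item iterate toward a ball.
\end{itemize}

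The difference lies in how the key inclusion is obtained. For Steiner symmetrization the paper sandwiches through Minkowski symmetrization, using $S_uK\subseteq M_uK$ (which needs $K$ convex) and the sub-linearity \eqref{eq:minkowski-sum-c-is-sub-linear}. No such sandwich is available for $\tau_H$. Your pointwise case check, based on $|a-R_Hb|\ge|a-b|$ for $a,b\in H^+$, is the natural replacement, and it works for arbitrary $K$. I checked the four subcases and the two ``intersection'' cases; they cover everything and each inequality is used in the right direction.

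You overestimate the difficulty of the limiting step, because semicontinuity works in your favour.
\begin{itemize}
\item If $K^c=\emptyset$ there is nothing to prove.
\item Otherwise $\tau_H(K^c)\neq\emptyset$, so your inclusion gives $(\tau_HK)^c\neq\emptyset$. Hence $\outrad\le 1$ is preserved along the whole sequence of iterates $K_m$, and also in any Hausdorff limit.
\item If $K_m\to B(z,r')$, then $\cconv(K_m)\to B(z,r')$ by Proposition \ref{prop:c-hull-is-continuous}.
\item Since $c$-duality is a Hausdorff isometry on $\S_n$, you get $K_m^c=(\cconv K_m)^c\to B(z,1-r')$.
\item Volume is continuous on convex bodies, so $\vol(K^c)\le\vol(K_m^c)\to(1-r')^n\kappa_n$.
\item Upper semicontinuity of volume only yields $\kappa_n r'^n\ge\limsup\vol(K_m)=\vol(K)=\kappa_n r^n$, that is, $r'\ge r$. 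This is exactly the direction needed: $(1-r')^n\kappa_n\le(1-r)^n\kappa_n$.
\end{itemize}
So you only need some sequence of two-point symmetrals converging to some ball, not necessarily one of the same volume. This cited convergence fact plays the same role as the Steiner-convergence citation in the paper's proof of Corollary \ref{cor:santalofixedvol}.
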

Bezdek used this theorem to prove a special case of the Knesser-Poulsen conjecture (See \cite{bezdek2008kneser}).

\section{Linear parameter systems}\label{sec:lps}

Steiner symmetrizations can be thought of as a time $1$ map for a parametric family of bodies. These are best expressed via ``shadow systems'' (thinking of the shadow an object casts as a function of time when the sun moves) or of ``linear parameter system''. We describe these in the classical setting as well as the ball-bodies setting. 

For a set $A\subset \RR^n$, a vector $v\in \RR^n$, and a ``velocity function'' $\alpha:A\to \RR$, we let 
\[ A_t =  \{x+ t \alpha(x)v: x\in A\},\qquad K_t =  {\rm conv} (A_t),\]
and 
\[ L_t =  \cconv (A_t).\]

In classical convexity theory, the set $K_t$ is called a linear parameter system, and these were investigated in depth by Rogers and Shephard \cite{shephard1964shadow, rogers1958extremal}, where for example it was shown that $\vol(K_t)$ is a convex function of $t$ (as are all other quermassintegrals of $K_t$, see e.g.~\cite[Theorem 10.4.1]{schneider2013convex}). The following proposition can be seen as analogous to the fact that $1/\vol(K_t^\circ)$ is a convex  function of $t$ (with the usual polar), which was
proved by Campi and Gronci in \cite{CampiGronchi2006}.

\begin{prop}
Let $n\in {\mathbb N}$,  $A\subset \RR^n$,  $v\in \RR^n$ and  $\alpha:A\to \RR$. For $ t_\lambda  = (1-\lambda)t_0 + \lambda t_1$ it holds that 
	\[ L_{t_\lambda} \subseteq (1-\lambda)L_{t_0} + \lambda L_{t_1}\qquad {\rm and} \qquad (1-\lambda)  L_{t_0} ^c + \lambda  L_{t_1} ^c \subseteq L_{t_\lambda}^c.\]
	In particular, $\vol(L_t^c)^{1/n}$ is  concave in $t$, as are the quermassintegrals $V_k(L_t^c)^{1/k}$.
\end{prop}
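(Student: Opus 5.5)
The plan is to prove the first inclusion directly from the definitions, then obtain the second from it by $c$-duality together with the sub-linearity \eqref{eq:minkowski-sum-c-is-sub-linear}, and finally read off the volume and quermassintegral statements from the Brunn--Minkowski inequality. Throughout we may assume that $A_{t_0}$ and $A_{t_1}$ have out-radius at most $1$. Otherwise, for instance, $L_{t_0}=\RR^n$ and $L_{t_0}^c=\emptyset$; then the first inclusion is trivial, and the second holds with the convention that an empty Minkowski sum is empty.

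\textbf{Step 1: the pointwise identity.} For every $x\in A$,
\[ x+t_\lambda\alpha(x)v=(1-\lambda)\bigl(x+t_0\alpha(x)v\bigr)+\lambda\bigl(x+t_1\alpha(x)v\bigr). \]
Hence $A_{t_\lambda}\subseteq (1-\lambda)A_{t_0}+\lambda A_{t_1}\subseteq (1-\lambda)L_{t_0}+\lambda L_{t_1}$, using $A\subseteq A^{cc}$.

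\textbf{Step 2: the first inclusion.} Since $L_{t_0},L_{t_1}\in\S_n$, the near-linearity recalled in the introduction shows that $(1-\lambda)L_{t_0}+\lambda L_{t_1}\in\S_n$. This set contains $A_{t_\lambda}$, so it contains the smallest element of $\S_n$ containing $A_{t_\lambda}$, namely $L_{t_\lambda}=\cconv(A_{t_\lambda})$. This gives the first inclusion.

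\textbf{Step 3: the second inclusion.} Applying $c$-duality, which reverses inclusions, to the first inclusion gives
\[ L_{t_\lambda}^c\supseteq\bigl((1-\lambda)L_{t_0}+\lambda L_{t_1}\bigr)^c. \]
By \eqref{eq:lin}, the right-hand side equals $(1-\lambda)L_{t_0}^c+\lambda L_{t_1}^c$. Alternatively, \eqref{eq:minkowski-sum-c-is-sub-linear} already yields the inclusion of $(1-\lambda)L_{t_0}^c+\lambda L_{t_1}^c$ in that set, which is all we need. Either way we get the second inclusion.

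\textbf{Step 4: concavity.} By monotonicity of volume and the Brunn--Minkowski inequality,
\[ \vol(L_{t_\lambda}^c)^{1/n}\ge\vol\bigl((1-\lambda)L_{t_0}^c+\lambda L_{t_1}^c\bigr)^{1/n}\ge(1-\lambda)\vol(L_{t_0}^c)^{1/n}+\lambda\vol(L_{t_1}^c)^{1/n}. \]
For the quermassintegrals the argument is the same: $V_k$ is monotone under inclusion, and $V_k^{1/k}$ is concave with respect to Minkowski combinations of convex bodies. The latter is the general Brunn--Minkowski inequality for quermassintegrals, which follows from the Aleksandrov--Fenchel inequalities.

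\textbf{Main obstacle.} The substantive point is Step 2, namely that the Minkowski combination of two $c$-hulls is again $c$-convex, so that it dominates the $c$-hull of $A_{t_\lambda}$. This is exactly the closure of $\S_n$ under Minkowski combination stated before \eqref{eq:lin}; it comes from the summand characterization: if $K+K'=B(0,1)$ and $T+T'=B(0,1)$, then $\bigl((1-\lambda)K+\lambda T\bigr)+\bigl((1-\lambda)K'+\lambda T'\bigr)=B(0,1)$. The remaining care is only in the degenerate cases where some $L_t$ is empty or all of $\RR^n$, handled as above.
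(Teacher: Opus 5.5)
Your proposal is correct and follows essentially the same route as the paper: the pointwise identity gives $A_{t_\lambda}\subseteq(1-\lambda)L_{t_0}+\lambda L_{t_1}$, closure of $\S_n$ under Minkowski combinations upgrades this to the first inclusion, $c$-duality together with \eqref{eq:lin} gives the second, and Brunn--Minkowski gives the concavity. One small caveat, which the paper also leaves implicit: Brunn--Minkowski fails when a summand is empty, so the concavity should be read on the set of $t$ where $\outrad(A_t)\le 1$, i.e.\ where $L_t^c\neq\emptyset$ (an interval, since the out-radius is convex in $t$), rather than on all of $\RR$ under your empty-sum convention.
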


\begin{proof}
Denote $A_\lambda = A_{t_\lambda}$ and $L_\lambda = L_{t_\lambda}$. Since by definition $A_{ \lambda} \subseteq (1-\lambda)A_0 + \lambda A_1$, it holds that 
    $A_{ \lambda} \subseteq (1-\lambda)L_0 + \lambda L_1$ and the right hand side belongs to $\S_n$ by \eqref{eq:lin}. Therefore we also have that  $L_{ \lambda} \subseteq (1-\lambda)L_0 + \lambda L_1$, proving the first inclusion.

    Applying $c$-duality to both sides, and using \eqref{eq:lin} again, we get $(1-\lambda)L_0^c + \lambda L_1^c \subseteq  L_{t_\lambda}^c$ as claimed. The corresponding concavity follows from the Brunn-Minkowski inequality. 
\end{proof}

It is natural to ask  whether in analogy to the classical theorem of Rogers and Shephard, the function $\vol(L_t)$ is convex. This question is intimately tied with the question of whether Steiner symmetrization preserves the class $\S_n$. As we shall demonstrate shortly (in Theorem \ref{thm:RSconvexityofvolR2} and Section \ref{sec:stein-not-in-class}), the answer is that this is true in dimension $n\le 2$ and false in higher dimensions. However, when considering a system of just {\em two} points, the answer is yes in any dimension, as the following proposition states.

\begin{prop}\label{prop-vol-of-carambula-is-convex}
Let $n\ge 2$. For any $y_0 \in \RR^n$, the function
$F:\RR^n \to [0,\infty]$ given by 
$F(x) =  \vol \left(\cconv(x,y_0)\right)$  is convex.
\end{prop}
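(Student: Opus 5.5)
The plan is to reduce the statement to a one-variable computation. First identify $\cconv(x,y_0)$ explicitly. Put $d=|x-y_0|$. If $d>2$, no unit ball contains both points, so $\{x,y_0\}^c=\emptyset$ and $\cconv(x,y_0)=\RR^n$, giving $F(x)=\infty$. If $d\le 2$, the intersection of all unit balls containing $x$ and $y_0$ is the ``spindle''. This is the body of revolution about the line through $x,y_0$ bounded by the unit-radius circular arcs joining the two points. I would verify this by checking two things. Every such ball contains the spindle, via a planar argument in each plane through the axis. Conversely, each boundary point of the spindle lies on the boundary of some unit ball through $x$ and $y_0$ containing both.

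It follows that $F(x)=f(|x-y_0|)$, where $f:[0,\infty)\to[0,\infty]$ is given by $f=\infty$ on $(2,\infty)$ and, for $d\in[0,2]$, by
\[ f(d)=\kappa_{n-1}\int_{-d/2}^{d/2}\rho_d(s)^{n-1}\,ds,\qquad \rho_d(s)=\sqrt{1-s^2}-\sqrt{1-d^2/4}. \]
Here $\rho_d(s)$ is the radius of the cross-section at signed distance $s$ from the midpoint. Since $F$ is $f$ composed with the convex function $x\mapsto|x-y_0|$, it suffices to show that $f$ is nondecreasing and convex as an extended-valued function on $[0,\infty)$. The jump to $+\infty$ after $d=2$ causes no problem: $f(2)=\kappa_n$ is finite, and a convex nondecreasing function on $[0,2]$, extended by $+\infty$ beyond $2$, stays convex and nondecreasing. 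Monotonicity is clear, since the integration interval grows with $d$ and $\rho_d(s)$ increases pointwise in $d$.

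The key step, and the only real computation, is convexity of $f$ on $[0,2]$. Differentiate in $d$. The boundary terms from the moving limits vanish because $\rho_d(\pm d/2)=0$ (for $n=2$ the exponent is $0$ but the integrand is still continuous). This gives
\[ f'(d)=(n-1)\kappa_{n-1}\cdot\frac{d/4}{\sqrt{1-d^2/4}}\cdot\int_{-d/2}^{d/2}\rho_d(s)^{n-2}\,ds . \]
Both factors depending on $d$ are nonnegative and nondecreasing. The first is explicit. The second is nondecreasing for the same reason as $f$: the interval grows and $\rho_d$ increases pointwise. For $n=2$ the integral simply equals $d$. Hence $f'$ is a product of nonnegative nondecreasing functions, so it is nondecreasing, and $f$ is convex on $[0,2)$. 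By continuity at $d=2$, $f$ is convex on $[0,2]$.

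I expect the main care to be needed in the explicit identification of the $c$-hull as the spindle, which is where the geometry enters. The endpoint behaviour also needs checking: $f'(d)\to\infty$ as $d\to 2$, and $F$ passes from a finite value to $+\infty$. Neither breaks convexity of the extended function, but both should be stated explicitly.
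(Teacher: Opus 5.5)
Your proposal is correct and follows the paper's proof essentially step for step. The steps are the same: the explicit spindle-volume formula, differentiation showing $f'$ is a product of nonnegative nondecreasing factors, and composition of the nondecreasing convex extended-valued function with the convex map $x\mapsto\|x-y_0\|$. You parametrize by the full distance with a symmetric integral, whereas the paper uses $d=\|x-y_0\|/2$ and integrates over $[0,d]$, and your identification of $\cconv(x,y_0)$ as the spindle and your treatment of the endpoint $d=2$ only make explicit what the paper leaves implicit.
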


\begin{proof}
Letting $d = \|x-y_0\|/2$, the volume of the $1$-lens $\cconv(x,y_0)\subseteq \RR^n $ is given (up to a multiplicative constant $2\kappa_{n-1}$) by the function
\begin{equation}\label{eq:vol-of-car} F_n(d) =\int_0^d
\left(\sqrt{1-t^2} - \sqrt{1-d^2}    \right)^{n-1} dt\end{equation}
for $d\le 1$, and $F_n(d)=\infty$ otherwise. To see that $F_n$ is convex, it suffices to check that $F_n((1-\lambda)d_0 + \lambda d_1) \le (1-\lambda) F_n(d_0 ) + \lambda F_n(d_1)$ for $d_i\in(0,1)$. To this end differentiate
\begin{eqnarray*} F_n'(d) &=& \int_0^d (n-1) \left(\sqrt{1-t^2} - \sqrt{1-d^2}    \right)^{n-2} \cdot( d (1-d^2)^{-1/2} )dt + 0
	\\& =&(n-1)d (1-d^2)^{-1/2}  F_{n-1}(d) > 0
\end{eqnarray*}
(here $n\ge2$ and $F_1(d)=d$). Since $F_n'$ is the product of two (or three) positive increasing functions, it is increasing,
which means $F_n$ is convex. Since the function
$F(x)=F_n(\|x-y_0\|/2)$ is the composition of the convex function $x\mapsto \|x-y_0\|/2$ with the increasing convex function $F_n$, it is convex, which completes the proof.
\end{proof}

Proposition \ref{prop-vol-of-carambula-is-convex} implies that for a $c$-shadow system of two points, the volume is indeed a convex function. More precisely,
\begin{cor}\label{cor-2-pt-chadow-system-has-convex-vol}
Let $n\ge 2$, $v\in S^{n-1}$, $x_0,y_0\in \R^n$, $\alpha,\beta\in\R$, and for each $t\in\R$ let 
\[ L_t =  \cconv \{x_0+ t \alpha v, y_0+ t \beta v\}.\]
Then the function $f(t):=\vol_n(L_t)$ is convex.
\end{cor}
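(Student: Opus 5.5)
The plan is to reduce the statement to Proposition \ref{prop-vol-of-carambula-is-convex} using translation invariance of volume and of the $c$-hull. Since $c$-duality commutes with isometries, $(A+z)^{c}=A^c+z$ and $(A+z)^{cc}=A^{cc}+z$ for any $z\in\R^n$. Hence
\[ L_t = \cconv\{x_0+t\alpha v,\, y_0+t\beta v\} = \cconv\{x_0 - y_0 + t(\alpha-\beta)v,\; y_0\} + t\beta v .\]
Taking volumes, this gives
\[ f(t) = \vol\bigl(\cconv\{x(t), y_0\}\bigr) = F(x(t)), \qquad x(t) := x_0-y_0+y_0+t(\alpha-\beta)v = x_0 + t(\alpha-\beta)v - t\beta v + t\beta v, \]
and more simply $f(t)=F(x_0+t(\alpha-\beta)v)$ with $F$ defined relative to $y_0$ as in Proposition \ref{prop-vol-of-carambula-is-convex}.

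Next, the map $t\mapsto x_0+t(\alpha-\beta)v$ is affine from $\R$ to $\R^n$. A convex function composed with an affine map is convex, including when it takes values in $[0,\infty]$, since the convexity inequality passes through affine substitution verbatim. So $f$ is convex as a function into $[0,\infty]$.

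The only point needing care is the value $\infty$, which occurs when the two points are at distance more than $2$. In the paper's convention, the $c$-hull of such a pair is empty or degenerate, and the volume is set to $+\infty$ exactly as in the definition of $F_n$. Convexity in the extended sense then means the set $\{t: f(t)<\infty\}$ is an interval on which $f$ is convex. This follows because $\{t : \|x_0-y_0+t(\alpha-\beta)v\|\le 2\}$ is an interval, being the preimage of a ball under an affine map.

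I expect no real obstacle. The main thing to check is that the finite-value convention matches the one in Proposition \ref{prop-vol-of-carambula-is-convex}, so that the corollary is read with the same extended-value convention.
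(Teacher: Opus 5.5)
Your proof is correct and is essentially the paper's argument. You translate by $t\beta v$ and compose the convex $F$ of Proposition~\ref{prop-vol-of-carambula-is-convex} with the affine map $t\mapsto x_0+t(\alpha-\beta)v$, while the paper composes the increasing convex $F_n$ directly with the convex function $d(t)=\|x_0-y_0+t(\alpha-\beta)v\|/2$, which amounts to the same thing.

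Two small slips to fix. Your first display should read $L_t=\cconv\{x_0+t(\alpha-\beta)v,\,y_0\}+t\beta v$, without the $-y_0$. Also, when the two points are at distance more than $2$, the $c$-hull is all of $\R^n$ (their $c$-dual is empty), so $f(t)=+\infty$ is a genuine value rather than a convention and the $c$-hull is neither empty nor degenerate.
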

\begin{proof}
As in Proposition \ref{prop-vol-of-carambula-is-convex}, we denote the half-diameter of the $1$-lens $L_t$ by 
\[d(t):=\|x(t)-y(t)\|/2,\]
where $x(t)=x_0 + t\alpha v$ and $y(t)=y_0 + t\beta v$. The function $F_n:[0,\infty) \to [0,\infty]$ (from Proposition \ref{prop-vol-of-carambula-is-convex}) is convex and increasing, and the function $d:\R\to[0,\infty) $ is convex, thus the composition $f=F_n\circ d$ is convex.
\end{proof}

\subsection{The planar case}

In this section we show that in dimension $n=2$, the volume of the $c$-hull of a linear parameter system is indeed convex. 

\begin{thm}\label{thm:linparsysR2}
Let $A\subset \RR^2$ be a bounded set,  let $v\in S^1$ and let  $\alpha : A\to \RR$. For each $t\in \RR$ let $ L_t =  \cconv \{x+ t \alpha(x)v: x\in A\}$. Then the function $F(t)  = \vol (L_t )$ is convex.  
\end{thm}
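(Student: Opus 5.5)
The plan is to write $F(t)=\vol(L_t)$ as a pointwise supremum of functions of $t$ that are each convex, by the same mechanism as Corollary \ref{cor-2-pt-chadow-system-has-convex-vol}. The main obstacle is choosing the family of candidates so that every candidate is bounded above by $\vol(L_t)$ for all $t$, and not just at the times where it describes the boundary of $L_t$.

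\textbf{Reduction to finite $A$.} Let $x_t=x+t\alpha(x)v$ denote the moving points. If $F$ is convex for every finite subset of $A$, the general case follows by approximation. For this, take an increasing sequence of finite subsets whose moving points are Hausdorff-dense in $A_t$. On the set where $\mathrm{Outrad}(A_t)<1$, the $c$-hull is Hausdorff-continuous and so is area; where $\mathrm{Outrad}(A_t)>1$ we set $F=\infty$, consistent with Proposition \ref{prop-vol-of-carambula-is-convex}. Pointwise limits of convex functions are convex. The boundary case $\mathrm{Outrad}(A_t)=1$ needs a short separate check.

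\textbf{Structure for finite $A$.} In the plane, $L_t$ is a disk-polygon. Its vertices $p_1,\dots,p_k$ form a subset of $A_t$, listed counterclockwise. Its boundary consists of unit-circle arcs through consecutive vertices. This gives
\[
\vol(L_t)=\tfrac12\sum_{i}\det(p_i,p_{i+1})+\sum_i g(|p_{i+1}-p_i|),
\]
where $g(\ell)$ is the area of the circular segment of the unit circle cut off by a chord of length $\ell\le 2$. Equivalently, $g(\ell)$ is half the area of the $1$-lens of two points at distance $\ell$, so $g(\ell)=2F_2(\ell/2)$ up to the constant $\kappa_1$. Hence $g$ is convex and increasing by the computation in Proposition \ref{prop-vol-of-carambula-is-convex}. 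For a fixed cyclic sequence $\sigma=(x^1,\dots,x^k)$ of points of $A$, define
\[
\Phi_\sigma(t)=\tfrac12\sum_i \det(x^i_t,x^{i+1}_t)+\sum_i g(|x^{i+1}_t-x^i_t|).
\]
The determinant terms are \emph{affine} in $t$, because the $t^2$ coefficient is $\alpha\beta\det(v,v)=0$. Each $g$-term is convex, since $t\mapsto|x^{i+1}_t-x^i_t|$ is convex and $g$ is convex and increasing, exactly as in Corollary \ref{cor-2-pt-chadow-system-has-convex-vol}. Thus every $\Phi_\sigma$ is convex in $t$. Moreover, $\Phi_\sigma(t)=F(t)$ whenever $\sigma$ is the vertex sequence of $L_t$.

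\textbf{Main step: the upper bound.} It remains to show $F(t)=\sup_{\sigma\in\Sigma}\Phi_\sigma(t)$ for a suitable admissible family $\Sigma$ that is independent of $t$. That is, I need $\Phi_\sigma(t)\le \vol(L_t)$ for all $\sigma\in\Sigma$ and all $t$.

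The first thing I would try is the following interpretation. $\Phi_\sigma(t)$ is the integral of the winding number of the closed curve $\gamma_\sigma(t)$. This curve goes through $x^1_t,\dots,x^k_t$, and each step from $x^i_t$ to $x^{i+1}_t$ is the short unit arc bulging to the right of the directed chord. Each such arc lies in $L_t$, because $L_t$ is $c$-convex and so contains the $1$-lens of any two of its points. Hence $\gamma_\sigma(t)\subset L_t$, and the winding number vanishes outside $L_t$. Therefore $\Phi_\sigma(t)\le \vol(L_t)$ provided the winding number is at most $1$ almost everywhere.

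So the difficulty is to rule out winding number $\ge 2$ uniformly in $t$. My approach would be to take $\Sigma$ to consist of sequences $\sigma$ ordered by the direction of an outer normal, in the spirit of how Rogers and Shephard order vertices. Then I would show that for such $\sigma$ the tangent direction of $\gamma_\sigma(t)$ turns monotonically through total angle $2\pi$. If this is true, $\gamma_\sigma(t)$ is locally convex with total turning $2\pi$, hence a simple closed convex curve, and its winding number is at most $1$.

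If this ordering fails to be $t$-invariant, the fallback is a local argument. Between the finitely many times at which the vertex set of $L_t$ changes, $F$ equals a single convex $\Phi_\sigma$. At a transition time $t_0$, I would verify that the one-sided derivatives satisfy $F'_-(t_0)\le F'_+(t_0)$. To do this, I would compare the two competing disk-polygons near $t_0$ using the above containment, $\gamma_{\sigma}(t)\subset L_t$, which only needs to hold for $t$ near $t_0$.
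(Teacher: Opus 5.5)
Your preliminary steps are sound: the reduction to finite $A$, the formula $\vol(L_t)=\frac12\sum_i\det(p_i,p_{i+1})+\sum_i g(|p_{i+1}-p_i|)$, and the convexity of each $\Phi_\sigma$ (the shoelace part is even affine in $t$). The gap is the main step, which you leave open. In its global form it cannot be closed: a vertex sequence of $L_{t'}$ can violate $\Phi_\sigma\le F$ at another time, so no $t$-independent family $\Sigma$ containing the vertex sequences works. For instance, let $v=e_2$, let $\alpha$ vanish except at one point, and at $t=0$ take $p_1=(0,0)$, $p_2=(0.01,-0.006)$, $p_3=(0.02,-0.01)$, $p_4=(0.5,0)$, $p_5=(0.3,1.8)$. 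These are the five vertices of $L_0$ in counterclockwise order, so $\sigma=(p_1,\dots,p_5)$ must belong to $\Sigma$ (it is ordered by outer normals at $t=0$). Now let $p_2$ move to $(0.01,1.5)$ at $t=1$. Then $p_2$ is interior to $L_1=\cconv\{p_1,p_3,p_4,p_5\}$. The arcs of $\gamma_\sigma(1)$ from $p_1$ to $p_2$ and from $p_2$ to $p_3$ bulge to opposite sides of the almost vertical chords and cross near the bottom. The tangent of $\gamma_\sigma(1)$ still turns monotonically, but through $4\pi$, and a region of area about $0.7$ has winding number $2$. Numerically $\Phi_\sigma(1)\approx 2.68$ whereas $F(1)\approx 1.99$. (Ordering by outer normals is in any case $t$-dependent, so it does not define a fixed family.)

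Your fallback has the right shape, and with one idea added it becomes the paper's proof. However, the comparison at a transition time $t_0$ is exactly the step you do not carry out. The tool you propose for it, a winding-number bound for a vertex sequence continued past $t_0$, is the same unproven ingredient. The containment $\gamma_\sigma(t)\subset L_t$ holds for all $t$ anyway and is not the issue, and you also assume finitely many transition times without argument. The missing idea is to compare $F$ not with some $\Phi_\sigma$ but with the areas $f_I(t)=\vol(\cconv\{x_i+t\alpha_iv: i\in I\})$ of $c$-hulls of proper subsets $I$. Monotonicity of the $c$-hull gives $f_I\le F$ for \emph{every} $t$, with no winding-number control needed. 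The paper proves the $f_I$ convex by induction on the number of points, so $g=\max_I f_I$ is a convex minorant of $F$. Moreover $F=g$ whenever some point is not $c$-extremal (Proposition~\ref{prop:cara-hard-neeD}), and on each component of the open set $T$ where all points are $c$-extremal, $F$ is a single formula of your type $\Phi_\sigma$, hence convex. At an endpoint $a$ of such a component this gives precisely the inequality you need, $F'_-(a)\le g'_-(a)\le g'_+(a)\le F'_+(a)$. Lemma~\ref{lem:patching-convexity} then glues the pieces even when $T$ has countably many components.
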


The proof of Theorem \ref{thm:linparsysR2} follows from the case of a finite set $A$ (Theorem \ref{thm:RSconvexityofvolR2}) together with a limiting argument. For the case of a finite $A$ we will use induction on the number of points, combined with the following technical lemma regarding ``locally convex enlargement'' of a convex function.

\begin{lem}\label{lem:patching-convexity}
Let $g:\R\to (-\infty,\infty]$ be a convex function, and let $f:T\to (-\infty,\infty]$, where $T\subset \R$ is an open set, i.e. $T$ is a countable union of pairwise disjoint intervals $\{(a_n,b_n)\}_{n\in \N}$. Assume that for every such interval $(a_n,b_n)$, the restriction $f\big|_{(a_n,b_n)}$ is a convex function that agrees with $g$ at the endpoints of the interval i.e. 
$\lim_{t\to a_n^+}f(t)=g(a_n)$, and $\lim_{t\to b_n^-}f(t)=g(b_n)$. Then the function
\[
h(t)=
\begin{cases}	\max\{f(t),g(t)\}&  t\in T  \\
	        g(t)& t\notin T
\end{cases}
\]
is convex in $\R$.
\end{lem}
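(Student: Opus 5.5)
We will reduce to the case of a single interval and then glue convex pieces at the endpoints by comparing one-sided derivatives.

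\emph{Reduction to one interval.} For each $n$ define $h_n$ by $h_n(t)=\max\{f(t),g(t)\}$ for $t\in(a_n,b_n)$ and $h_n(t)=g(t)$ otherwise. Every $t\in\R$ lies in at most one interval $(a_n,b_n)$, and $h_n\ge g$ for all $n$. Hence $h=\sup_n h_n$ pointwise: for $t\in(a_m,b_m)$ the supremum equals $h_m(t)=h(t)$, and for $t\notin T$ every $h_n(t)=g(t)$. A pointwise supremum of convex functions is convex, so it suffices to prove that each $h_n$ is convex. Fix one interval $I=(a,b)$ and write $\tilde h=h_n$. If $I$ is unbounded on one side, only one endpoint needs treatment.

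\emph{Pieces.} On $(-\infty,a]$ and on $[b,\infty)$ the function $\tilde h$ coincides with $g$, so it is convex there. On $[a,b]$ it coincides with $\max\{\bar f,g\}$, where $\bar f$ is $f$ extended to $[a,b]$ by its limits $g(a)$ and $g(b)$. This extension $\bar f$ is convex on $[a,b]$. Indeed, the chord inequality for $f$ on $(a,b)$ passes to the limit when an endpoint of the chord tends to $a$ or $b$, because the chord values converge and $f$ converges to the prescribed endpoint values. Therefore $\max\{\bar f,g\}$ is convex on $[a,b]$, and $\tilde h$ is convex on each of the three closed pieces. The pieces share the values $\tilde h(a)=g(a)$ and $\tilde h(b)=g(b)$.

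\emph{Gluing.} A function that is convex on $[c,a]$ and on $[a,d]$ is convex on $[c,d]$ provided its left derivative at $a$ does not exceed its right derivative at $a$. This is the standard slope criterion: it yields monotonicity of difference quotients across $a$. The left derivative of $\tilde h$ at $a$ is $g'_-(a)$. The right derivative is the right derivative of $\max\{\bar f,g\}$ at $a$. Since $\max\{\bar f,g\}\ge g$ on $[a,b]$ with equality at $a$, the right difference quotients of $\max\{\bar f,g\}$ at $a$ dominate those of $g$. Hence
\[
\tilde h'_+(a)\ \ge\ g'_+(a)\ \ge\ g'_-(a)=\tilde h'_-(a),
\]
where the middle inequality is convexity of $g$. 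The symmetric argument applies at $b$, with the inequalities reversed appropriately: the left quotients of $\max\{\bar f,g\}$ at $b$ are at most those of $g$. Gluing the three pieces gives that $\tilde h$ is convex on $\R$.

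\emph{Main obstacle.} The delicate step is the bookkeeping with the value $+\infty$. Convexity must be phrased via the effective domain, which has to be shown to be an interval, and the derivative comparison is only needed at endpoints where $g$ is finite. If $g(a)=\infty$, then $\tilde h=\infty$ on one side of $a$. In that case one checks directly that the effective domain of $\tilde h$ is still an interval, because $f\to\infty$ at $a$ and $\operatorname{dom} g$ is an interval, and that the chord inequality is trivial whenever an infinite value is involved. I would handle these degenerate cases first and then run the derivative argument for finite endpoint values, where the one-sided derivatives exist in $[-\infty,\infty]$.
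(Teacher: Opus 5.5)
Your proposal is correct and follows essentially the same route as the paper: you reduce to finitely many intervals, then glue at each endpoint via $\tilde h'_-(a)=g'_-(a)\le g'_+(a)\le \tilde h'_+(a)$, which uses only that $\tilde h\ge g$ with equality at $a$. The differences are minor. You write $h=\sup_n h_n$ over single-interval modifications, whereas the paper takes a pointwise limit over finite unions $T_n$; your version automatically covers adjacent intervals sharing an endpoint, a case the paper's ``without loss of generality at $a_1$'' step glosses over. You are also more careful than the paper about $+\infty$ values.
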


\begin{proof}
Deonte $T_n=\cup_{k=1}^n (a_k,b_k)$ and let 
\[
h_n(t)=
\begin{cases}	\max\{f(t),g(t)\}&  t\in T_n  \\
	        g(t)& t\notin T_n
\end{cases}
\]
Clearly $h$ is the point-wise limit of $(h_n)_{n=1}^\infty$, thus it suffices to show that $h_n$ is convex. Since convexity is a local property, and it holds both 
on the open set $T_n$ and on open subsets of its complement, we need only check it at $\partial T_n$. Since $\partial T_n  = \{a_k,b_k\}_{k\le n}$ we may without loss of generality check convexity around the point $t = a_1$. The right derivative of $h_n$ at $a_1$ exists and is equal to the right derivative of $\max(f,g)$ (which is a convex function on $(a_1, b_1)$) at $a_1$. For some $\varepsilon>0$ we have $(a_1-\varepsilon, a_1)\subset \mathbb{R}\setminus T_n$ and $(a_1, a_1+\varepsilon)\subset T_n$, thus $h_n=g$ on $(a_1-\varepsilon, a_1)$ and we get
\begin{eqnarray*}
h_n'(a_1^-) &=& g'(a_1^-)\le g'(a_1^+) =
\lim_{\delta\to 0^+} \frac {g(a_1 + \delta) - g(a_1)}{\delta} \\
&\le &\lim_{\delta\to 0^+} \frac {h_n(a_1 + \delta) - g(a_1)}{\delta} =
\lim_{\delta\to 0^+} \frac {h_n(a_1 + \delta) - h_n(a_1)}{\delta} = h_n'(a_1^+),
\end{eqnarray*}
completing the proof.
\end{proof}

The boundary structure of a ball body can be studied and analyzed in a manner similar to that of convex bodies in $\RR^n$, by replacing convex hull with $c$-hull. For an elaborate discussion of this theme, explaining the similarities and differences, see \cite{AFpreprint}. For our purposes in this note, we only need a simple observation regarding {\em $c$-extremal points} (the $c$-hull variant of extreme points), namely that $c$-extremal points of the $c$-hull $\cconv(A)$, must belong to the original set $A$.

In Proposition \ref{prop:cara-hard-neeD} we give the definition of $c$-extremal points, together with the above mentioned statement. For its proof the reader is referred to \cite{AFpreprint} (although we shall only use it in the planar case, where the proof is very easy).  

\begin{prop}\label{prop:cara-hard-neeD}
Let $n\ge 2 $ and $K\in \S_n$. A point $x\in K$ is called $c$-extremal for $K$ if $x\in {\conv}_c (y,z)$ for $y,z\in K$ implies $y = x$ or $z = x$. We denote the set of $c$-extremal points of $K$ by ${\rm ext}_c(K)$. Let $A\subset \R^n$ be closed with $\outrad(A)<1$.
Then
\[
{\rm ext}_c(\cconv(A)) \subseteq A.
\]
\end{prop}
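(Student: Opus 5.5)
We argue by contraposition, with the plan modeled on the classical proof that extreme points of $\conv(A)$ lie in $A$. Let $K=\cconv(A)=A^{cc}$ and take $x\in K\setminus A$; we show $x$ is not $c$-extremal. Since $A$ is closed, there is $\varepsilon>0$ with $B(x,\varepsilon)\cap A=\emptyset$. The aim is to find $y,z\in K$, both different from $x$, with $x\in\cconv(y,z)$.

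\textbf{Step 1: reduce to the boundary.} If $x$ is an interior point of $K$, this is immediate. Take any short segment through $x$ inside $K$ with endpoints $y,z\neq x$. Then $x\in\conv(y,z)\subseteq \cconv(y,z)$, since a segment of length less than $2$ lies in the lens. So assume $x\in\partial K$.

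\textbf{Step 2: the boundary case.} Here we use the structure of $K=\bigcap_{p\in A^c}B(p,1)$. Since $\outrad(A)<1$, the set $A^c$ has nonempty interior, and $K$ is a compact ball-body. Let $P=\{p\in A^c: \|x-p\|=1\}$ be the set of centers of supporting unit balls at $x$; it is nonempty since $A^c$ is compact. The key claim is that $x$ lies in the $c$-hull of the "contact set" $A\cap\bigcup_{p\in P}S(p,1)$, localized near $x$.

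To justify this, suppose some small ball $B(x,\delta)$ misses $A$. Then we can perturb a center $p\in P$ slightly: translate it by a small vector roughly in the direction of $p-x$, or more precisely choose $p'$ with $\|p'-x\|>1$ but $A\subseteq B(p',1)$. This is possible because the points of $A$ on $S(p,1)$ are bounded away from $x$, and moving $p$ to $p'$ only loses a neighborhood of $x$ on the sphere. Then $p'\in A^c$, yet $x\notin B(p',1)$, contradicting $x\in A^{cc}$.

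The difficulty is that moving the center moves the whole sphere, and points of $A$ on $S(p,1)$ far from $x$ might exit the ball. We handle this using the strict inequality $\outrad(A)<1$: replace $p$ by a convex combination $p'=(1-s)p+sq$, where $q$ is the center of a radius-$r<1$ ball containing $A$. This pulls every far point strictly inside, with margin proportional to $s$. We then add a small shift toward $p-x$ of size $o(s)$ that pushes $x$ out. A compactness argument then gives the contradiction.

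\textbf{Step 3: extract $y$ and $z$.} From Step 2, points of $A$ accumulate at $x$ in every such neighborhood, contradicting $B(x,\varepsilon)\cap A=\emptyset$. So the boundary case cannot occur for $x\notin A$ after all. In other words, $K\setminus A$ consists only of interior points, and Step 1 finishes the proof.

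I expect Step 2 to be the main obstacle. Specifically, the perturbation $p\to p'$ must keep all of $A$ inside $B(p',1)$ while expelling $x$. This requires quantitative control of how a sphere near $x$ separates from the far part of $A$. In the planar case this is elementary geometry of arcs.
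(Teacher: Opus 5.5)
\textbf{There is a genuine gap: the claim you derive in Step 3 is false.} You conclude that every point of $K\setminus A$ is an interior point of $K=\cconv(A)$, so the contradiction Step 2 is supposed to produce does not exist.

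A counterexample: take $A=\{y,z\}$ with $0<\|y-z\|<2$. Then $K=\cconv(y,z)$ is a spindle with nonempty interior. Each boundary point $x\neq y,z$ lies on a short unit-circle arc from $y$ to $z$, at positive distance from $A$. Let $p$ be the center of that arc. Then $A\subseteq B(p,1)$, $\|x-p\|=1$, and $x-p$ is a positive combination of $y-p$ and $z-p$.

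Your perturbation fails precisely here. Moving the center by $\delta w$ expels, to first order, the whole open hemisphere $\{a\in S(p,1):\langle a-p,w\rangle<0\}$, not just a small cap around $x$. Any $w$ with $\langle x-p,w\rangle<0$ is negative on $y-p$ or on $z-p$. Indeed no admissible $p'$ can exist at all, since $x\in A^{cc}$.

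The convex combination with $q$ does not rescue this. Because $A\subseteq B(q',1)$ for every $q'\in B(q,1-r)$, you get $x\in A^{cc}\subseteq B(q,r)$. Hence $\langle x-p,p-q\rangle\le r-1<0$, so $p'=(1-s)p+sq$ pulls $x$ inward by order $s(1-r)$, just like the far points. An $o(s)$ shift cannot push it back out. Boundary points of $K$ outside $A$ genuinely exist, and they must be shown to be non-$c$-extremal directly rather than ruled out.

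\textbf{How to repair it.} Done correctly, the perturbation argument proves a weaker statement, and that statement is exactly the missing ingredient. Let $C=A\cap S(p,1)$, with $q,r$ as in your Step 2.

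For $a\in C$ one has $2\langle a-p,q-p\rangle\ge 1-r^2>0$, so the cone generated by $C-p$ is closed and pointed. Suppose $x-p$ were outside this cone. Take a separating $w$, and make it strictly positive on $C-p$ by adding a small multiple of $q-p$. A compactness argument on $A$ then gives $A\subseteq B(p+\delta w,1)$ and $x\notin B(p+\delta w,1)$ for small $\delta>0$, contradicting $x\in A^{cc}$. So the correct conclusion is not that $A$ accumulates at $x$, but that $x-p$ lies in the cone of contact directions.

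Write $x-p=\sum_{i=1}^k\lambda_i(a_i-p)$ with $a_i\in C$, $\lambda_i>0$ and $k$ minimal; $k\ge 2$ because $x\notin A$. Put $v=\sum_{i\ge2}\lambda_i(a_i-p)$ and $y'=p+v/\|v\|$.

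By induction on the number of terms, $y'\in K$. Each step uses that the short great-circle arc of $S(p,1)$ between two points of $K$ lies in their spindle, hence in $K$. Moreover $x$ lies on the short arc from $a_1$ to $y'$. We have $x\neq a_1$ since $x\notin A$, and $x\neq y'$ by minimality of $k$. Thus $x\in\cconv(a_1,y')$ with $a_1,y'\neq x$, so $x$ is not $c$-extremal.

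The paper itself does not prove this proposition; it defers to the authors' preprint and calls the planar case very easy. In the plane, the argument above simply says that $x$ lies on the arc $K\cap S(p,1)$ strictly between two contact points.
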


With these preliminaries in hand, we can provide the  proof for the following ball-polytope  case of Theorem \ref{thm:linparsysR2}.

\begin{thm}\label{thm:RSconvexityofvolR2}
Let $v\in S^{1}$, $A=\{ x_i\}_{i=1}^m \subset \RR^2$ and $\{ \alpha_i\}_{i=1}^m \subset \RR$. For each $t\in \RR$ let 
\[ L_t =  \cconv \{x_i+ t \alpha_i v : i=1, \ldots m\}.\]
 Then the function $h(t) = \vol(L_t)$ is convex.    
\end{thm}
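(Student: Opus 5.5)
We argue by induction on the number $m$ of points, as announced before Lemma \ref{lem:patching-convexity}. For $m=1$ the volume is identically $0$, and for $m=2$ convexity is Corollary \ref{cor-2-pt-chadow-system-has-convex-vol}. Throughout we use the convention $h(t)=\infty$ whenever the moving set $A_t=\{x_i+t\alpha_i v\}$ has out-radius larger than $1$. For such $t$ the $c$-hull is all of $\RR^2$ (or the $c$-dual is empty), so $h$ takes values in $[0,\infty]$.

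For the induction step, assume the statement holds for all sets of at most $m-1$ points. For each $j$ let $g_j(t)=\vol(\cconv(A_t\setminus\{x_j(t)\}))$, which is convex by the induction hypothesis, and set $g=\max_j g_j$, which is convex as well. Monotonicity of the $c$-hull gives $h\ge g$. Let $T$ be the set of times $t$ at which every point $x_j(t)$ is $c$-extremal in $L_t$, equivalently at which $x_j(t)\notin \cconv(A_t\setminus\{x_j(t)\})$ for all $j$. If $t\notin T$, some $x_j(t)$ lies in the $c$-hull of the others, so $L_t=\cconv(A_t\setminus\{x_j(t)\})$ and $h(t)=g_j(t)\le g(t)$. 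Hence $h=g$ off $T$. The set $T$ is open, since strict non-membership in a ball-polygon is stable under perturbation of the finitely many vertices; the degenerate times when the out-radius equals $1$ need separate care. So $T$ is a countable union of disjoint open intervals. By continuity of $h$ in the Hausdorff sense, the endpoint-agreement hypothesis of Lemma \ref{lem:patching-convexity} holds with $f=h|_T$. That lemma then yields convexity of $h=\max\{f,g\}$ on $T$ and $h=g$ elsewhere, provided $h$ is convex on each interval $I$ of $T$.

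The main obstacle is proving that $h$ is convex on each component $I$ of $T$. On $I$ all $m$ points are vertices of the ball-polygon $L_t$. In the plane the cyclic order of the vertices along $\partial L_t$ cannot change without some vertex passing through the $c$-hull of the others, so on $I$ the boundary is a fixed cyclic chain of unit-circle arcs joining consecutive points $x_{i}(t),x_{i+1}(t)$. This is where the planar hypothesis is used: the boundary of a ball-polygon in the plane consists of arcs between consecutive extreme points, which is Proposition \ref{prop:cara-hard-neeD} in its easy planar form. The area then decomposes as
\[
h(t)=\vol\big(\conv\{x_1(t),\dots,x_m(t)\}\big)+\sum_{i}\mathrm{Seg}\big(\|x_{i+1}(t)-x_i(t)\|\big),
\]
where $\mathrm{Seg}(\ell)$ is the area of the circular segment of the unit circle cut off by a chord of length $\ell$. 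The first term is the area of a polygon with fixed combinatorics moving along a shadow system, so it is convex by Rogers--Shephard; on an interval it is in fact a quadratic, indeed affine, function of $t$, because the determinants are linear in $t$ once the points move in a common direction. For the second term, each segment area is the area of the $1$-lens $\cconv(x_i,x_{i+1})$ up to a factor $1/2$, i.e. $\mathrm{Seg}(\ell)=\kappa_1 F_2(\ell/2)$. Exactly as in Corollary \ref{cor-2-pt-chadow-system-has-convex-vol}, this is the composition of the increasing convex function $F_2$ with the convex function $t\mapsto \|x_{i+1}(t)-x_i(t)\|/2$, hence convex. A sum of convex functions is convex, which settles convexity on $I$.

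Two technical points remain. First, one must check that the cyclic order really is constant on $I$, and that no arc's complementary orientation flips, i.e. each arc stays the short arc; both follow from openness and connectedness of $I$ together with the out-radius being less than $1$. Second, at the endpoints of $I$, where $h$ may jump to $\infty$ when the out-radius reaches $1$, one must make sure Lemma \ref{lem:patching-convexity} is applied with extended values consistently.
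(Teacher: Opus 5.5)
Your proposal is correct and is essentially the paper's proof: induction on $m$ starting from Corollary~\ref{cor-2-pt-chadow-system-has-convex-vol}, $g$ the maximum over proper subsets, $T$ the set of times at which all $m$ points are $c$-extremal (so that $h=g$ off $T$), the decomposition of $L_t$ on each component of $T$ into the polygon plus $m$ half-lenses with convex areas, and finally Lemma~\ref{lem:patching-convexity}. Your extra details (the polygon area being affine in $t$, endpoint agreement via Proposition~\ref{prop:c-hull-is-continuous}, constancy of the cyclic order) make explicit steps the paper leaves terse. The out-radius-one degeneracy you flag is passed over in the paper's proof too. It is handled by restricting to the closed interval $\{t:\outrad(A_t)\le 1\}$, which is an interval because $t\mapsto\outrad(A_t)$ is convex, as in the proof of Theorem~\ref{thm:linparsysR2}; outside it $h\equiv\infty$. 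You then run your argument on its interior, where every subset of $A_t$ has out-radius less than $1$, and use continuity of $h$ up to the endpoints.
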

\begin{proof}
We prove by induction on the number of points $m$, where the base case $m=2$ was handled in Corollary \ref{cor-2-pt-chadow-system-has-convex-vol}.
For every strict subset $I\subsetneq \{1,\dots,m\}$ we define $f_I:\R\to [0,\infty]$ by $f_I(t)=\vol(\cconv\{x_i+t\alpha_iv\,:\, i\in I\})$, and  define $g:\R\to [0,\infty]$ by $g=\sup_I \{f_I\}$, with the supremum running over all strict subsets of $\{1,\dots,m\}$. 
By the induction hypothesis, $f_I$ are convex on $\R$, and thus also $g$ is convex. 

By Proposition \ref{prop:cara-hard-neeD} we have ${\rm ext}_c(L_t)\subseteq \{x_i+ t \alpha_i v\}_{i=1}^m$ for every $t\in \RR$, and we define the set $T\subset \R$ to be the set of all $t\in\R$ for which there is equality in this inclusion, i.e. ${\rm ext}_c(L_t) = \{x_i+ t \alpha_i v\}_{i=1}^m$. 
The set $T$ is open and may be empty. If it is empty then by the induction hypothesis we are done, as $h = g$ everywhere. Assume $T$ is nonempty and define the function $f:T\to  [0,\infty]$ by $f(t)=\vol (L_t)$, i.e. $f = h|_T > g|_T$.
Consider an interval $(a,b)\subset T$.
Since on this interval ${\rm ext}_c(L_t) = \{x_i + t\alpha_i v\}_{i=1}^m$, the set $L_t$ is a disk-polygon, namely consists of a polygon $\conv \{x_i+t\alpha_iv\}_{i=1}^m$
and $m$ halves of $1$-lenses between neighboring vertices. Each of these sets has volume which is convex in $t$, and therefore the union has volume which is convex in $t$.  In other words, on the interval $(a,b)$, the function $f$ is convex. We thus satisfy the conditions of Lemma \ref{lem:patching-convexity} and conclude that $h(t)=\vol (L_t)$ is a convex function on $\RR$. 
 \end{proof}

Clearly $c$-polytopes are Hausdorff-dense in ball-bodies (since for a convex $K$ one can find a polytope $P\subseteq K$ which is arbitrarily close to it in the Hausdorff metric, and its $c$-hull will remain a subset of $K$ if $K\in \S_n$, giving a $c$-polytope which is at least as close). We can use this to  conclude convexity of the volume for general $c$-linear parameter systems in dimension $n = 2$ (the fact that  $\vol(L_t)$ may fail to be convex when working in dimension $n\ge 3$ will follow from Section \ref{sec:stein-not-in-class}). The only missing tool is the following proposition regarding continuity of the $c$-hull with respect to  the Hausdorff metric. This is quite intuitive and was proved in \cite[Corollary 2.11]{AFpreprint}.
 
\begin{prop}\label{prop:c-hull-is-continuous}
	Let $n\in \N$, and consider the class of non-empty subsets of $\RR^n$ with out-radius at most $1$. On this class, the mapping $K\mapsto K^{cc} = \cconv(K)$ is continuous in the Hausdorff metric.
    \end{prop}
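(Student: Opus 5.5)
The plan is to obtain continuity of $K\mapsto K^{cc}$ from continuity of the single map $K\mapsto K^c$, applied twice. Throughout, write $d_H$ for the Hausdorff distance and $\outrad$ for the out-radius. Since $A^c = \bar A^c$, we may pass to closures and work with compact sets.

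\emph{Step 1 (the map $A\mapsto A^c$ is $1$-Lipschitz in the inclusion sense).} Suppose $d_H(K,K')\le \eps$, so $K\subseteq K'+\eps B$ and $K'\subseteq K+\eps B$. I will aim for the two-sided inclusion
\[ K^c \subseteq (K'+\eps B)^c, \]
where $(K'+\eps B)^c=\bigcap_{x\in K'} B(x,1-\eps)$ is the set of points at distance at most $1-\eps$ from every point of $K'$. This is not yet a Hausdorff estimate, so the real content lies in comparing $\bigcap_{x\in K'}B(x,1-\eps)$ with $\bigcap_{x\in K'}B(x,1)$.

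\emph{Step 2 (the main obstacle).} Let $K$ have $\outrad(K)\le 1$. We need that every point of $K'^c$ lies close to $K'^c_{1-\eps}:=\bigcap_{x\in K'} B(x,1-\eps)$, with a bound that tends to $0$ as $\eps\to0$. The key observation is that $K'^c$ is convex and, since $\outrad(K')\le 1$, it contains the circumcenter $z$ of $K'$, at which every point of $K'$ is within distance $\rho:=\outrad(K')\le 1$. Given $y\in K'^c$, consider $y_s = (1-s)y+sz$. For every $x\in K'$ convexity of the norm gives $|y_s-x|\le (1-s)+s\rho$. When $\rho<1$, the choice $s=\eps/(1-\rho)$ places $y_s$ in $K'^c_{1-\eps}$ at distance at most $2s$ from $y$. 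Combined with Step 1 and the symmetric argument, this yields
\[ d_H(K^c,K'^c)\le C\eps/(1-\rho). \]
This settles continuity at every $K$ with $\outrad(K)<1$, since $\rho$ stays bounded away from $1$ near $K$.

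The boundary case $\outrad(K)=1$ is where I expect the difficulty. There $K^c$ may degenerate to a single point, or be empty if $\outrad>1$. I would handle it by noting that in this case $K^{cc}$ is the full $c$-hull, which is squeezed as follows: we always have $K\subseteq K^{cc}$, and $K^{cc}\subseteq\bigcap_{y\in K^c}B(y,1)$. To show that $K'^{cc}$ is close to $K^{cc}$, I would then use lower semicontinuity of $K\mapsto K^c$ in the Hausdorff sense, arguing by compactness (Blaschke selection). Take a sequence $K_j\to K$ and pass to a subsequence with $K_j^c\to M$. Closedness of the relation $\{|x-y|\le1\}$ gives $M\subseteq K^c$. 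Repeating the argument for the second dual, every limit of $K_j^{cc}$ contains $K$, lies in $\S_n$, and lies in $M^c\supseteq K^{cc}$. The remaining task is to show that $M$ is in fact all of $K^c$, and for that I would use the interpolation of Step 2 toward points of $K^c$ once more. This is the hardest step, and I would first try to verify it in the degenerate case, where $K^c$ is a single point.

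\emph{Step 3.} Finally, apply Step 2 twice, noting that $\outrad(K^c)\le 1$ whenever $K^c$ is nonempty. This gives continuity of $K\mapsto K^{cc}$.
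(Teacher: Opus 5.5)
Your interior case is sound apart from one slip. In Step 1 the inclusion is reversed. From $K\subseteq K'+\eps B$, order reversal gives $(K'+\eps B)^c=\bigcap_{x\in K'}B(x,1-\eps)\subseteq K^c$. The inclusion you wrote, $K^c\subseteq (K'+\eps B)^c$, already fails for $K=K'$. The corrected direction is exactly what Step 2 uses: every point of $K'^c$ lies within $2\eps/(1-\outrad(K'))$ of $K'^c_{1-\eps}\subseteq K^c$, and symmetrically. This gives continuity of $K\mapsto K^c$ at every $K$ with $\outrad(K)<1$. (The paper itself does not prove the statement; it defers to the authors' earlier preprint.)

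The genuine gap is the boundary case $\outrad(K)=1$, which you explicitly leave open as ``the hardest step.'' You cannot skip it, for three reasons:
\begin{itemize}
\item The class in the statement includes such sets.
\item The paper applies the proposition at the endpoints $t_{\min},t_{\max}$, where $\outrad(A_t)=1$.
\item Your own Step 3 needs it: for a singleton $K=\{x\}$, the set $K^c=B(x,1)$ has out-radius exactly $1$, so the second application of Step 2 lands in the boundary case.
\end{itemize}
The interpolation of Step 2 cannot handle this case, since $s=\eps/(1-\rho)$ blows up.

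The missing observation is that $\outrad(K)=1$ forces $K^c$ to be a single point. Suppose $K\subseteq B(z_0,1)\cap B(z_1,1)$ with $z_0\neq z_1$. The parallelogram identity gives $\bigl|x-\tfrac{z_0+z_1}{2}\bigr|^2\le 1-\tfrac14|z_0-z_1|^2$ for every $x\in K$, so $\outrad(K)<1$. Hence $K^c=\{z_0\}$, the circumcenter.

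With this, your compactness argument closes at once:
\begin{itemize}
\item Each $K_j^c$ is non-empty, because $\outrad(K_j)\le 1$.
\item The sets $K_j^c$ are uniformly bounded.
\item Any limit $y$ of points $y_j\in K_j^c$ satisfies $|y-x|\le 1$ for all $x\in K$, so $y=z_0$.
\end{itemize}
Thus $\sup_{y\in K_j^c}|y-z_0|\to 0$, that is, $K_j^c\to K^c$ in the Hausdorff metric. No further interpolation toward points of $K^c$ is needed, and no non-degenerate case at $\outrad(K)=1$ exists.

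So $K\mapsto K^c$ is continuous on the whole class. It also maps the class into itself, since, as you noted, $\outrad(K^c)\le 1$ whenever $K\neq\emptyset$. Composing the map with itself then gives continuity of $K\mapsto K^{cc}$.
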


\begin{proof}[Proof of Theorem \ref{thm:linparsysR2}]
    Recall $A_t = \{ x+ t\alpha(x)v:x\in A\}$. First note that the function $R_t = \outrad(A_t)$ is convex. Indeed, the inclusion
    $A_{t_\lambda} \subseteq (1-\lambda)A_{t_0} + \lambda A_{t_1}$ clearly holds by definition for $t_{\lambda} = (1-\lambda)t_0 +\lambda t_1$, and since the out-radius of a set is convex with respect to Minkowski addition, we see that $R_t$ is convex. Therefore, there is an interval $[t_{\min}, t_{\max}]$ where $(A_t)^{cc} \neq \RR^n$, and $L_{t_{\min}}, L_{t_{\max}}$ are unit Euclidean balls (there is only one case where $t_{\min} = -\infty$ and $t_{\max} = +\infty$, namely when $\alpha$ is a constant function, as in all other cases there are two points moving in different velocities, meaning for large enough $|t|$, the out-radius of $A_t$ is more than $1$).
We may thus restrict to times $t\in [t_{\min}, t_{\max}]$, as out of this interval $F(t) = +\infty$. 

Let $t_0, t_1\in [t_{\min}, t_{\max}]$ and $\lambda \in (0,1)$, and let 
$t_{\lambda} = (1-\lambda)t_0 +\lambda t_1$. Let $D_0, D_1, D_\lambda$ be countable subsets of $A$, such that $(D_{i})_t \subseteq A_{t_i}$ is dense for $i = 0,1,\lambda$, and consider $D_0\cup D_1\cup D_\lambda \equiv D \subseteq A$. Let $(A_m)_{m=1}^\infty$ be an increasing sequence, such that $A_m\subseteq D$ consists of $m$ points, and $\cup_{m=1}^\infty A_m = D$. 
By construction, $(A_m)_t \to A_t$ for $t = t_0, t_1, t_\lambda$ as $m\to \infty$, where this limit is in the Hausdorff sense. 

By Proposition \ref{prop:c-hull-is-continuous}, and using that $\outrad((A_m)_t)\le \outrad(A_t)\le 1$, we see that $ \cconv((A_m)_t) \to \cconv (A_t)$ for $t = t_0, t_1, t_\lambda$. We use Theorem \ref{thm:RSconvexityofvolR2} which implies
\[ \vol(\cconv (A_m)_{t_\lambda}) \le (1-\lambda)\vol(\cconv (A_m)_{t_0}) + \lambda \vol(\cconv (A_m)_{t_1}).
\]
Taking the limit $m\to\infty$, and using continuity (with respect to the Hausdorff distance) of volume on the class $\S_2$, we get the desired inequality.
\end{proof}

\section{Steiner symmetrizations and the class $\S_n$}\label{sec:Steiner}

It is well known that, in the notations of Section \ref{sec:lps}, the Steiner symmetral $S_u(K)$ can be realized by a linear parameter system in direction $v=u$, as follows. Letting $A=K$ and assigning velocity $\alpha((x,y))=-(a(x)+b(x))/2$ to all points $(x,y)\in K$ with $y\in [a(x), b(x)]$, we have $K_0 = K$, $K_1=S_u(K)$, and $K_2 = R_u K$.

Since 
for any $t\in[0,2]$ we have $A_t=K_t$, the length of the segment $K_t\cap\left(x+\R u\right)$ is constant for any $x\in P_{u^\perp} (K)$, Fubini's theorem implies that 
the volume of $K_t$ is constant for $t\in [0,2]$.  Clearly $K_t\subseteq L_t$, which ties intimately the question of whether Steiner symmetrization preserves the class $\S_n$ with the question of convexity of $\vol(L_t)$. Indeed, 
if $S_u(K)\not\in \S_n$ then $K_1 \subsetneq L_1$, which means  
\[
\vol(L_1) = \vol(\cconv(S_u(K)))> \vol(S_u(K)) = \vol(L_0) = \vol(L_2).
\]
We will see in Section \ref{sec:steinerintheplane} and Section \ref{sec:stein-not-in-class} that Steiner symmetrization preserves the class $\S_n$ only when $n\le 2$. In particular, this implies that $\vol(L_t)$ cannot  be convex (or even quasi-convex) in general for $n\ge 3$.

\subsection{Steiner Symmetrization of ball-bodies in the plane}\label{sec:steinerintheplane} 

In this section we prove the following theorem.

\begin{thm}\label{thm:InR2Steiner-preserves-class}
	Let $K\in \S_2$ and let $u\in S^{1}$.
	The Steiner symmetral $S_u(K)$ of $K$ in direction $u$   also belongs to $\S_2$. 	
\end{thm}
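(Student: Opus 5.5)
We propose to reduce the statement to a local curvature condition on the boundary of the planar Steiner symmetral. Write $u=e_2$, so $P_{u^\perp}K=[x_-,x_+]$, and describe $K=\{(x,y): a(x)\le y\le b(x)\}$ with $b$ concave and $a$ convex. In the plane, a compact convex set belongs to $\S_2$ exactly when each boundary arc has curvature at least $1$ in the generalized sense. Equivalently, for every boundary point $p$ with outer normal $\nu$, the set lies in the unit disk $B(p-\nu,1)$. For graphs this can be phrased via the functions themselves. The upper boundary $b$ is ``$1$-concave'' precisely when, at each point, $b$ lies below the arc of the unit circle tangent to the supporting line there. Infinitesimally this means $-b''\ge (1+b'^2)^{3/2}$ in the viscosity/distributional sense. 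The symmetral is $S_uK=\{|y|\le c(x)\}$ with $c=(b-a)/2$. So we must show two things: that $c$ satisfies the same differential inequality on $(x_-,x_+)$, and that the endpoint structure (vertical segments or corners at $x_\pm$) is compatible.

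\textbf{Step 1 (interior, smooth case).} First approximate $K$ by a disk-polygon (a $c$-polytope). By the density remark before Proposition \ref{prop:c-hull-is-continuous}, and since $\S_2$ is closed in the Hausdorff metric and Steiner symmetrization is continuous on convex bodies, it suffices to treat smooth or piecewise-circular boundaries. Then $b=\beta$ and $a=-\gamma$ with $\beta,\gamma$ both satisfying $-f''\ge(1+f'^2)^{3/2}$, so $c=(\beta+\gamma)/2$. We need
\[
\tfrac12\bigl(\phi(\beta')+\phi(\gamma')\bigr)\ge \phi\bigl(\tfrac{\beta'+\gamma'}2\bigr),\qquad \phi(s)=(1+s^2)^{3/2}.
\]
This is exactly convexity of $\phi$ (Jensen), since $\phi''>0$, and it gives $-c''\ge\phi(c')$. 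At the finitely many corners of a disk-polygon, the one-sided slopes of $c$ jump downward, because the jumps of $\beta'$ and $\gamma'$ are both downward. Corners are therefore allowed, as in any ball-polygon.

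\textbf{Step 2 (endpoints).} At $x_+$, $K$ meets the line $x=x_+$ in a segment, possibly degenerate. The symmetral meets it in a centered segment of the same length $\ell$. Since $K\in\S_2$ has width at most $2$, we have $\ell\le 2$. The requirement is that the vertical segment together with the adjacent arcs form a locally $1$-convex boundary. Near the endpoint, the vertical edge has infinite curvature, which is fine. At the corners $(x_+,\pm\ell/2)$ we need the turning to be consistent, i.e.\ $c'(x_+^-)$ is finite or $-\infty$, which holds by convexity. One subtle point remains: a curve of curvature $\ge1$ that becomes vertical must do so through an arc. If $\ell>0$ and $c'(x_+^-)=-\infty$, this is a smooth join, also fine.

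\textbf{Step 3 (local to global).} In the plane, a convex closed curve whose curvature is $\ge1$ everywhere (in the generalized sense) bounds a set lying in the unit disk tangent at each boundary point. This is the classical Blaschke rolling theorem, and it gives $S_uK\in\S_2$.

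We expect the main obstacle to be making Step 1 rigorous without smoothness. We would avoid distributional second derivatives and instead use the ``tangent unit-circle'' comparison. If $\beta\le$ arc $A_\beta$ and $\gamma\le$ arc $A_\gamma$ near $x_0$, then $c\le (A_\beta+A_\gamma)/2$. Jensen then shows that the average of two unit-circle arcs of slopes $s_1,s_2$ lies below the unit-circle arc of slope $(s_1+s_2)/2$ through the averaged point, at least locally, to second order. Upgrading this from second order to a genuine local comparison is the delicate part. The fallback is the disk-polygon approximation, where the boundary consists of finitely many exact circular arcs and the computation is explicit.
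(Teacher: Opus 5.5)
Your core argument is the paper's second proof: Jensen for the convex function $\phi(s)=(1+s^2)^{3/2}$ applied to $c=(\beta+\gamma)/2$, then approximation, continuity of $S_u$ on bodies with interior, and closedness of $\S_2$. The only real difference in route is the approximating family. You use disk-polygons instead of the $C^\infty$ ball-bodies of the appendix. This replaces the smoothing proposition by the elementary density of $c$-polytopes. The price is that you must handle corners, which you do correctly: $\beta'$ and $\gamma'$ both jump down, so $c'$ does too. You also need the non-smooth form of the curvature characterization of $\S_2$ in Step 3, namely that the tangent angle satisfies $\theta(s_2)-\theta(s_1)\ge s_2-s_1$; this is standard in the plane. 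Incidentally, the distributional version you mention would already work without any approximation. For concave $b$, generalized curvature at least $1$ is equivalent to $-b''\ge \phi(b')\,dx$ as measures, since the singular part of $-b''$ is automatically nonnegative. That inequality passes to $c$ by linearity and Jensen applied a.e. So the step you call delicate is not actually an obstacle.

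Step 2, however, contains a false statement. A vertical segment of positive length on $\partial S_uK$ has curvature $0$, not $\infty$. If $\ell>0$ could occur, $S_uK$ would fail to be in $\S_2$. The case is vacuous only because of a fact you did not invoke: a set in $\S_2$, including each approximating disk-polygon, has no segments on its boundary. Hence $K\cap\{x=x_\pm\}$ is a single point and $\ell=0$; the paper records exactly this as $f(a)=-g(a)$, $f(b)=-g(b)$. The bound $\ell\le 2$ plays no role. Once $\ell=0$, the point $(x_+,0)$ is either a convex corner or a $C^1$ point of $\partial S_uK$, and Step 3 covers it. The paper's argument is cleaner and avoids inspecting $c'(x_+^-)$. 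If $p$ is the rightmost point of $K$, then $e_1$ is an outer normal of $K$ at $p$, so $K\subseteq B(p-e_1,1)$. By monotonicity, $S_uK\subseteq S_uB(p-e_1,1)=B((x_+-1,0),1)$, which is a unit disk supporting $S_uK$ at $(x_+,0)$.
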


We shall give two proofs for the theorem. The first relies on the results of the previous section regarding linear parameter systems, whereas the second is analytic and will help us construct the counterexample in dimension $3$. 

\begin{proof}[First proof of Theorem \ref{thm:InR2Steiner-preserves-class}]
Consider the linear parameter system described above, with $A=K$, $v = u$,  $\alpha(x+tu) = -(a(x)+b(x))/2$ for all points $x+tu\in K$ with $x\in P_{u^\perp}K$ and $t\in [a(x), b(x)]$. With this construction we have $K_0 = K$, $K_1=S_u(K)$, and $K_2 = R_u(K)$. Note that by Theorem \ref{thm:linparsysR2}, in $\RR^2$ the function $\vol(L_t)$ is convex, and on the other hand $\vol(L_t) \ge \vol(K_t) = \vol(K)$. Therefore the function $\vol(L_t)$ must be constant on the interval $[0,2]$, and in particular the bodies $L_1$ and $K_1$ coincide, thus $S_u(K) = L_1 \in \S_2$.
\end{proof}

\begin{proof}[Second proof of Theorem \ref{thm:InR2Steiner-preserves-class}]
Besides $\emptyset, \RR^2$, and singletons, sets in $\S_2$ are characterized as closed convex sets for which the generalized curvature at all boundary points is at least $1$. We may without loss of generality assume that $u = e_2$. We first prove the theorem in the case where  $K$ is smooth, with boundary given by the graphs of two twice continuously differentiable concave functions $f$ and $-g$ with some support set $[a,b]$, that is $K=\{(x,y) \,:\, x\in [a,b],\, y\in [f(x),-g(x)] \}$. Since there are no segments on the boundary of a set in $\S_2$, we see that $f(a)=-g(a)$ and $f(b) = -g(b)$. 
 
 For $x\in (a,b)$ 
 the curvatures at the points $(x,f(x))$ and $(x,-g(x))$ are given by $\kappa_f(x) = \frac{|f''(x)|}{(1+(f'(x))^2)^{3/2}}$ and $\kappa_g(x) = \frac{|g''(x)|}{(1+(g'(x))^2)^{3/2}}$ (the general dimensional case is in \eqref{eq:sec-curv} below).  Letting $h(x)=(f(x)+g(x))/2$, the Steiner symmetral of $K$ is given by $S_u(K)=\{(x,y) \,:\, x\in [a,b],\, y\in [h(x),-h(x)] \}$, and the curvature at points $(x,\pm h(x))$ satisfies
	\begin{eqnarray}\label{eq:Steiner-is-in-S_2}
		\kappa_h(x) &=& \frac{|h''(x)|}{(1+(h'(x))^2)^{3/2}}
		=\frac{\frac12|f''(x)+g''(x)|}{\left(1+\left(\frac{f'(x)+g'(x)}{2}\right)^2\right)^{3/2}}\ge \\
		&\ge& \frac{\frac12|f''(x)+g''(x)|}
		{
			\frac12\left(1+f'(x)^2\right)^{3/2} +
			\frac12\left(1+g'(x)^2\right)^{3/2}
		}=
		\frac{|f''(x)|+|g''(x)|}
		{ |f''(x)|/ \kappa_f(x) + |g''(x)|/ \kappa_g(x)} \ge 1.\nonumber
	\end{eqnarray}
	The first inequality holds since the function $t\mapsto (1+t^2)^{3/2}$ is (strictly) convex, and the second inequality holds since $\kappa_f,\kappa_g\ge1$. 
    
We need to also consider the points $x=a$ and $x=b$. Start with the latter. By smoothness, the normal to $K$ at $(b,f(b))$ is in direction $e_1$ and   by assumption, the ball $B((b-1, f(b)), 1)$ contains $K$. Therefore $S_u(K) \subset S_u (B((b-1, f(b)), 1)) = B((b-1, 0), 1)$. This is a $1$-ball supporting $S_u(K)$ at $(b,h(b)) = (b,0)$. The same argument works for $x = a$.
This completes the proof in the case of sufficiently smooth $K$. 

To complete the proof in the general case we use that 
Steiner symmetrization is continuous on bodies with non-empty interior (see e.g. \cite[Proposition A.5.1.]{AGMBook}), and that smooth bodies are dense in $\S_n$. This latter property was proven in \cite{AFpreprint}, and we include it in the appendix as Theorem \ref{thm:dense-are-the-smooth} for completeness. With these in hand, given a body $K\in \S_2$, we take $K_m\to K$ with $K_m\in \S_2$ which are sufficiently smooth, use the first part of the proof to get that $S_u(K_m)\in \S_2$ for all $m$, and the continuity of $S_u$ to get $S_u(K_m) \to S_u(K)$. We use that $\S_2$ is closed in the Hausdorff metric, which can be easily verified, say by using that there is some $T_m$ with $S_u(K_m)+T_m = B(0,1)$, and $T_m$ will have some limit $T$ since $S_u(K_m)$ has a limit, and by definition $S_u (K)+ T = B(0,1)$ we have $S_u (K) \in \S_2$, as claimed.
\end{proof}

\begin{rem}
Theorem \ref{thm:InR2Steiner-preserves-class} has many applications, which are quite direct. We mention one which is of interest, namely the analogue of Macbeath's theorem from \cite{Macbeath1951} regarding best approximation from within.  Of all bodies $K\in \S_2$ of area $V$, the Euclidean disk of area $V$ is hardest to approximate in the following sense. Fixing the amount of vertices $N$, consider a  disk-polygon $P_N$ with $N$  vertices contained in $K$,  for which $\vol(P_N)$ is the largest. This area is minimal when $K$ is a Euclidean ball. Formally: 
\begin{thm}
For $N\in \mathbb N$ and $K\in \S_2$ let  
\[
V_N(K) = \max\{ \vol(P_N): P_N~{\rm has~at~most~}N{\rm ~vertices}\}. 
\]
      Let $K\in \S_2$ 
and let
       $r = (\vol(K)/\pi)^{1/2}$, that is  $\vol(rB_2^2) = \vol(K)$. Then 
        \[ V_N(rB_2^2) \le V_N(K),\qquad {\rm for~all~}N\in \mathbb N.\]
\end{thm}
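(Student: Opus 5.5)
We follow Macbeath's classical argument, with Theorem \ref{thm:InR2Steiner-preserves-class} (and the linear parameter systems of Section \ref{sec:lps}) replacing the usual facts about Steiner symmetrization. The core claim is monotonicity:
\[ V_N(S_u K)\le V_N(K)\qquad \text{for every } K\in\S_2,\ u\in S^1,\ N\in\N. \]
Granting this, pick a sequence of Steiner symmetrizations $K_j=S_{u_j}\cdots S_{u_1}K$ converging in the Hausdorff metric to $rB_2^2$ (e.g.\ \cite[Theorem 1.1.16]{AGMBook}). By Theorem \ref{thm:InR2Steiner-preserves-class}, every $K_j$ lies in $\S_2$. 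Each $V_N(K_j)$ is attained by compactness, and $V_N$ is upper semicontinuous along Hausdorff-convergent sequences in $\S_2$. To see this, note that an optimal disk-polygon for $K_j$ has vertices in $K_j$; pass to a limit of these vertex sets, and use Proposition \ref{prop:c-hull-is-continuous} together with continuity of volume. The $c$-hull of the limiting vertices lies in $rB_2^2$, since $c$-hulls of subsets of a ball-body stay inside it. This gives $V_N(rB_2^2)\ge\limsup V_N(K_j)$, which is the wrong direction. So we need the reverse, continuity from below, instead. Take an optimal polygon $P$ for $rB_2^2$ and shrink it slightly, $(1-\varepsilon)P$. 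For large $j$ this lies inside $K_j$, because $K_j\to rB_2^2$ and $K_j$ eventually contains $(1-\varepsilon)rB_2^2$. Hence $V_N(K_j)\ge(1-\varepsilon)^2V_N(rB_2^2)$. Combined with $V_N(K)\ge V_N(K_j)$, this yields the theorem.

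To prove monotonicity, fix $K\in\S_2$ and $u$. Let $P=\cconv\{p_1,\dots,p_N\}\subseteq K$ be optimal, writing $p_i=(x_i,y_i)$ in coordinates $u^\perp\times\R u$. For each $x_i$, the vertical chord of $K$ above $x_i$ is $[a(x_i),b(x_i)]$. Run the linear parameter system of Section \ref{sec:Steiner} restricted to the finite set $A=\{p_i\}$, with velocities $\alpha_i=-(a(x_i)+b(x_i))/2$. At time $t$ the points are $p_i(t)=p_i+t\alpha_iu$.
\begin{itemize}
\item At $t=0$ the points lie in $K$.
\item At $t=2$ they lie in $R_uK$.
\item At $t=1$ they lie in $S_uK$, since each point stays on its own chord and the chord is translated to be centered.
\end{itemize}
Let $L_t=\cconv\{p_i(t)\}$. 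Because $S_uK\in\S_2$ and the points at $t=1$ lie in $S_uK$, we get $L_1\subseteq S_uK$. The subtle direction is the one we actually need: given an optimal polygon $Q$ inside $S_uK$, we must produce a polygon inside $K$ of at least the same area. So run the system backwards, starting from the vertices $q_i=(x_i,y_i)$ of $Q$. Set $q_i(t)=q_i+(t-1)\beta_iu$ with $\beta_i=-(a(x_i)+b(x_i))/2$. Then $q_i(0)\in K$ and $q_i(2)\in R_uK$, since a point with $|y_i|\le (b-a)/2$ is shifted back into the original chord. By Theorem \ref{thm:RSconvexityofvolR2}, $t\mapsto\vol(\cconv\{q_i(t)\})$ is convex on $[0,2]$. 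Therefore
\[ \vol(Q)\le\tfrac12\bigl(\vol(L_0)+\vol(L_2)\bigr)\le\max\{\vol(L_0),\vol(L_2)\}. \]
Now $L_0\subseteq K$ because $K\in\S_2$, and $L_2\subseteq R_uK$, with $R_uL_2$ a disk-polygon in $K$ with at most $N$ vertices. Hence $V_N(S_uK)=\vol(Q)\le V_N(K)$.

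The main obstacle is the planar convexity input, which is supplied by Theorem \ref{thm:RSconvexityofvolR2}. The step that needs the most care is the claim that $c$-hulls of finitely many points of $K$ stay inside $K$ whenever $K\in\S_2$. This holds because $K=K^{cc}$ is the smallest ball-body containing its points, and $L_0$ is the smallest ball-body containing the $q_i(0)$. One must also handle the degenerate case where points coincide or the out-radius reaches $1$; since $\outrad\le 1$ is preserved inside $K$, this causes no infinite volumes.
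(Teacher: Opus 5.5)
Your proof is correct apart from one slip in the final approximation step, and the monotonicity part takes a different route from the one the paper indicates.

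\textbf{Comparison of routes.} The paper only sketches the proof, and it follows Macbeath literally. From $Q=\cconv\{(x_i,s_i)\}\subseteq S_uK$ one builds $P_\pm=\cconv\{(x_i,m(x_i)\pm s_i)\}\subseteq K$, where $m$ is the chord midpoint. One then takes the body whose upper and lower envelopes are the averages of those of $P_+$ and $R_uP_-$. The paper's claim is that this averaged body is again in $\S_2$. So once it contains the $q_i$ it contains their $c$-hull $Q$, while by Fubini its area is $\frac12(\vol(P_+)+\vol(P_-))$.

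You work with the same three disk-polygons: $L_0=P_+$, $L_2=R_uP_-$, $L_1=Q$. Instead of the averaged body, you use the convexity of $t\mapsto\vol(L_t)$ from Theorem \ref{thm:RSconvexityofvolR2}, which is the shadow-system form of Macbeath's argument. Theorem \ref{thm:InR2Steiner-preserves-class} is then needed only to keep the iterates $K_j$ in $\S_2$. This mirrors the two proofs of Theorem \ref{thm:InR2Steiner-preserves-class}: yours is the linear-parameter-system one, the paper's sketch is the curvature one.

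What your route buys is that no new membership statement is needed. The paper's averaged body is a fibrewise average of two \emph{different} ball-bodies, so Theorem \ref{thm:InR2Steiner-preserves-class} does not apply verbatim. Moreover, arcs of a disk-polygon can bulge past its extreme vertices, so $P_+$ and $R_uP_-$ need not have the same projection onto $u^\perp$, and this requires care at the ends. The price is reliance on the heavier Theorem \ref{thm:RSconvexityofvolR2}, whereas the paper's route is a local curvature computation.

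\textbf{The slip in the limiting argument.} The body $(1-\eps)P$ is not a disk-polygon, since its arcs have radius $1-\eps$. The disk-polygon on the shrunk vertices, $\cconv\{(1-\eps)p_i\}$, is contained in $(1-\eps)P$ and in general has strictly smaller area. So $V_N(K_j)\ge(1-\eps)^2V_N(rB_2^2)$ does not follow.

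The repair is short:
\begin{itemize}
\item For large $j$ the points $(1-\eps)p_i$ lie in $(1-\eps)rB_2^2\subseteq K_j$.
\item Hence $\cconv\{(1-\eps)p_i\}\subseteq K_j$, since $K_j\in\S_2$.
\item By Proposition \ref{prop:c-hull-is-continuous}, $\vol(\cconv\{(1-\eps)p_i\})\to\vol(P)=V_N(rB_2^2)$ as $\eps\to0$.
\end{itemize}
Thus $\liminf_j V_N(K_j)\ge V_N(rB_2^2)$, which together with $V_N(K_j)\le V_N(K)$ finishes the proof. The upper-semicontinuity detour and the forward-running system at the start of the monotonicity step can simply be deleted.
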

\noindent     For the proof one simply follows Macbeath's proof from \cite{Macbeath1951}, and use that the body obtained by averaging the upper envelope $g$ and the lower envelope $-f$ must also be in $\S_2$, hence once it contains all the vertices, it also contained their $c$-hull. 
    This observation along with more sophisticated ones will appear in full in future work. 
\end{rem}

\subsection{A counterexample in dimension $3$}\label{sec:stein-not-in-class}

The previous section makes it natural to believe the Steiner symmetrization will preserve the class $\S_n$ is any dimension, since it respects inclusion by balls. However, as we shall see in this section, already in dimension $3$ the symmetral of a set in $\S_3$ might have some sectional curvature smaller than $1$.

In fact, our counterexample will be an $(n-1)$-lens, namely the intersection of two Euclidean unit balls. The fact that once a counter-example exists, such a simple set can serve as a counter-example, follows from the following proposition, which essentially states that if, given some set $K\in \S_n$ and some direction $u\in S^{n-1}$, the family of supporting lenses $L_x$ for $K$ satisfy that all these lenses satisfy $S_u(L_x)\in \S_n$, then $S_u(K) \in \S_n$. In particular, once we know a counterexample exists, there must be one given by a lens. While we could have simply written up the lens counter-example without this proposition, we  include it since in certain cases it can help determine, for a given $K$, that $S_u(K) \in \S_n$.

\begin{prop}\label{prop:lenses-suffice}
Let $K\in \S_n$ and $u\in S^{n-1}$. Assume that for every $x\in P_{u^{\perp}}(K)$, letting $t,s\in \RR$ satisfy $[x+su, x+tu] = K\cap (x+\RR u)$, a supporting lens $L = (z+B_2^n) \cap (w+ B_2^n)$ with $K\subseteq L$ and $x+tu\in \partial L$, $x+su\in \partial L$ exists with $S_u(L)\in \S_n$. Then $S_u(K)\in \S_n$.    
\end{prop}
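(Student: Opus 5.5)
The plan is to show that $S_u(K)$ is an intersection of unit balls. Since $S_u(K)$ is a convex body, it suffices to produce, for every boundary point of $S_u(K)$, a unit ball containing $S_u(K)$ whose boundary passes through that point. Equivalently, it suffices to show that every point outside $S_u(K)$ is excluded by some unit ball containing $S_u(K)$. We will get these balls from the supporting lenses in the hypothesis, using monotonicity of Steiner symmetrization.

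\textbf{Step 1 (monotonicity and contact).} If $K\subseteq L$, then $S_u(K)\subseteq S_u(L)$. This holds because every fiber of $K$ is contained in the corresponding fiber of $L$, and the projection $P_{u^\perp}(K)$ is contained in $P_{u^\perp}(L)$. Now fix $x\in P_{u^\perp}(K)$ with $K\cap(x+\RR u)=[x+su,x+tu]$, and let $L$ be the lens given by the hypothesis. We claim $L\cap(x+\RR u)=[x+su,x+tu]$ exactly. Indeed, the points $x+su$ and $x+tu$ lie in $\partial L$, and $L$ is convex. If the fiber of $L$ were strictly longer, then one of these points would be interior to the segment $L\cap(x+\RR u)$ while lying on $\partial L$. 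By convexity this forces the whole fiber segment to lie in $\partial L$. That is impossible for a lens whenever the fiber has positive length, since $\partial L$ contains no segments. (The degenerate case where the fiber is a single point needs a separate small remark.) Consequently the fibers of $S_u(L)$ and $S_u(K)$ over $x$ coincide, so the points $x\pm\frac{t-s}{2}u$ lie in $\partial S_u(L)$.

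\textbf{Step 2 (supporting balls).} By hypothesis $S_u(L)\in\S_n$, so $S_u(L)$ is an intersection of unit balls. Hence there is a unit ball $B$ with $S_u(L)\subseteq B$ and $x+\frac{t-s}{2}u\in\partial B$. By Step 1, $S_u(K)\subseteq S_u(L)\subseteq B$, so $B$ is a unit ball supporting $S_u(K)$ at $x+\frac{t-s}{2}u$. The same argument applies to $x-\frac{t-s}{2}u$.

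\textbf{Step 3 (covering all of $\partial S_u(K)$).} Every boundary point of $S_u(K)$ has the form $x\pm\frac{t-s}{2}u$ for some $x\in P_{u^\perp}(K)$, including the points lying over the relative boundary of the projection. By Step 2, each such point is supported by a unit ball containing $S_u(K)$. Intersecting all these balls gives a set $M\in\S_n$ with $S_u(K)\subseteq M$ and $\partial S_u(K)\subseteq\partial M$. We must then argue that $M=S_u(K)$. The natural argument: if $y\in M\setminus S_u(K)$, take a point $p$ in the interior of $S_u(K)$. The segment from $p$ to $y$ leaves $S_u(K)$ at some point $q\in\partial S_u(K)$. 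Since $q$ lies on $\partial B$ for a unit ball $B\supseteq S_u(K)$, and the ball $B$ is strictly convex, the segment beyond $q$ exits $B$, so $y\notin B$. This contradicts $y\in M$. (If $S_u(K)$ has empty interior, the degenerate cases have to be handled separately.)

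\textbf{Main obstacle.} The main obstacle is the care needed in Step 1 at degenerate fibers. One case is $s=t$, which happens over the relative boundary of the projection. Another is the possibility that the lens $L$ touches the fiber segment only at its endpoints while its own fiber is longer, which has to be ruled out by strict convexity. Over the boundary of $P_{u^\perp}(K)$ I would first try the direct argument from the planar proof: a unit ball $B(z,1)$ supporting $K$ with outer normal orthogonal to $u$ symmetrizes to a ball centred in $u^\perp$, which supports $S_u(K)$. If that fails, I would instead use a limiting argument from interior fibers.
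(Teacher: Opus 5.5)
Your proof is correct and follows the paper's argument: monotonicity $S_u(K)\subseteq S_u(L)$ together with equality of the fibers of $K$ and $L$ over $x$, so that unit balls supporting $S_u(L)$ at $x\pm\frac{t-s}{2}u$ also support $S_u(K)$. Your extra care is justified, because the paper only asserts the fiber equality (and that supporting balls at boundary points suffice), whereas over the relative boundary of $P_{u^\perp}(K)$, where $s=t$, an admissible lens can have a strictly longer fiber over $x$; there your direct argument (the ball $B(p-\nu,1)\supseteq K$ for an outer unit normal $\nu\perp u$ at $p=x+su$, which symmetrizes to $B(x-\nu,1)$) or your limiting argument is genuinely needed.
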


\begin{rem}
    Note that at least one supporting lens $L = (z+B_2^n) \cap (w+ B_2^n)$ with $K\subseteq L$ and $x+tu\in \partial L$, $x+su\in \partial L$ exists. We phrased the theorem's condition as  the existence of a lens with symmetral in $\S_n$ since there may be more than one supporting lens at $x+tu$ and $x+su$, and it suffices to require only {\em one} of these lenses to have its symmetral be in $\S_n$. 
\end{rem}

\begin{proof}[Proof of Proposition \ref{prop:lenses-suffice}]
    Assume the condition in the statement of the proposition holds, and let $x\in P_uK$. Then $S_u(K)$ has the two boundary points $x \pm ru$ with $r=\frac{(t-s)}{2}$. It suffices to show that at these boundary points the set $S_u(K)$ has a supporting unit Euclidean ball. Consider the lens $L_x = (z(x)+B_2^n )\cap (w(x)+ B_2^n)$ with $K\subseteq L_x$ and $x+tu\in \partial L_x$, $x+su\in \partial L_x$ which is given by the assumption in the proposition. Then $K\cap (x + \RR u) = [x+su, x+tu] = L_x\cap (x+\RR u)$. Therefore 
    \[ S_u (K) \cap  (x + \RR u) =  [x-ru, x+ru] = S_u (L_x) \cap  (x + \RR u).\]
    Also, $S_u(K) \subseteq S_u(L_x)$ by monotonicity of Steiner symmetrization. The assumption $S_u(L_x)\in \S_n$ implies one may find supporting unit Euclideans ball for $S_u(L_x)$ at $x+ru$ and at $x-ru$, and these will also be supporting unit Euclidean balls for $S_u(K)$ at these points, as required. 
\end{proof}

\begin{thm}\label{thm:counter1}
    The lens in $\RR^3$ given by 
    \[ L = B(c_0, 1) \cap B(-c_0,1), \quad c_0 = (-0.2794,0.2451,0.36) \]
satisfies that $S_{e_3}(L)\not\in \S_3$. More specifically,
the sectional curvature in direction $e_1$ at the two points $(0.4154,0.7262,\pm z)\in \partial S_{e_3}(L)$ is smaller than $1$.
\end{thm}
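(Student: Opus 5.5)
We plan to prove the statement by an explicit computation of the Steiner symmetral near the indicated points, writing its upper boundary as the graph of a function $h$ over a neighbourhood of $x_0=(0.4154,0.7262)$ in $e_3^\perp=\RR^2$. Put $c_0=(p,q,r)$ with $p=-0.2794$, $q=0.2451$, $r=0.36$. For $x=(x_1,x_2)$ in the projection of $L$, the vertical line over $x$ meets $B(c_0,1)$ in the interval with endpoints $r\pm\sqrt{1-|x-(p,q)|^2}$. It meets $B(-c_0,1)$ in the interval with endpoints $-r\pm\sqrt{1-|x+(p,q)|^2}$. The chord of $L$ above $x$ is the intersection of these two intervals.

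The first step is to decide which spheres bound the chord near $x_0$. We will check numerically that at $x_0$ the top of the chord comes from $\partial B(c_0,1)$ and the bottom from $\partial B(-c_0,1)$. Both square roots are positive there, and the strict inequalities selecting these endpoints hold. So $x_0$ lies strictly inside the region where the lens is bounded by one cap from each sphere, and is not on the projection of the rim circle $\partial B(c_0,1)\cap\partial B(-c_0,1)$. On a neighbourhood of $x_0$ we then obtain
\[
h(x)=\tfrac12\Bigl(2r+\sqrt{1-|x-(p,q)|^2}+\sqrt{1-|x+(p,q)|^2}\Bigr)\cdot\tfrac12\cdot 2 - r
=\tfrac12\Bigl(\sqrt{1-|x-(p,q)|^2}+\sqrt{1-|x+(p,q)|^2}\Bigr),
\]
that is, half the chord length. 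The boundary of $S_{e_3}(L)$ near $(x_0,\pm z)$, with $z=h(x_0)$, is the pair of smooth graphs $y=\pm h(x)$.

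The second step computes the normal curvature of the graph of the smooth concave function $h$ at $x_0$ in the tangent direction lying over $e_1$. Writing $g=1+|\nabla h|^2$, the second fundamental form in the coordinate frame is $-D^2h/\sqrt g$ and the metric is $I+\nabla h\nabla h^T$. The normal curvature along the tangent vector $(1,0,\partial_1 h)$ is therefore
\begin{equation}\label{eq:sec-curv}
\kappa_{e_1}=\frac{-\partial_{11}h}{\sqrt{g}\,\bigl(1+(\partial_1 h)^2\bigr)}.
\end{equation}
The first and second derivatives of each square root are explicit: with $\phi=\sqrt{1-|x-a|^2}$ we have $\nabla\phi=-(x-a)/\phi$ and $\partial_{11}\phi=-1/\phi-(x_1-a_1)^2/\phi^3$. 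So $\kappa_{e_1}(x_0)$ is a closed-form number. Evaluating it should give a value strictly below $1$, with some margin. The symmetric point $(x_0,-z)$ follows by the reflection symmetry of $S_{e_3}(L)$.

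The third step draws the conclusion. A set in $\S_3$ has every normal curvature at least $1$ wherever its boundary is $C^2$, because a supporting unit ball at a boundary point forces the surface to lie inside it. Hence $\kappa_{e_1}<1$ at a smooth boundary point gives $S_{e_3}(L)\notin\S_3$.

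The main obstacle is numerical reliability. The parameters are given to four digits, presumably tuned so that $\kappa_{e_1}$ is just below $1$, so the margin may be small. We would carry out the evaluation with interval bounds on the square roots, or with enough precision to certify $\kappa_{e_1}<1$ rigorously. We would also confirm that $x_0$ is in the interior of the relevant projection region, so that $h$ is genuinely smooth there.
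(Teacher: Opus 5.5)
\textbf{Same method as the paper, but step one is wrong as stated.} Your approach matches the paper's. You write the boundary of $S_{e_3}(L)$ near $x_0=(0.4154,0.7262)$ as the graph of $h$, the average of one cap function from each sphere. You then evaluate the normal curvature of that graph along $e_1$ with the standard graph formula and find a value below $1$. The paper packages the evaluation through the sphere identity $|\partial_{11}\phi|=\psi(\nabla\phi)$, so that $\kappa_h=\tfrac12\bigl(\psi(\nabla f_d)+\psi(\nabla g_u)\bigr)/\psi\bigl(\tfrac12(\nabla f_d+\nabla g_u)\bigr)\approx 6.32/6.51$. This displays the failure as non-convexity of $\psi$. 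Your direct computation of $\partial_{11}h$ produces the same numbers.

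\textbf{The cap identification fails.} Write $S_1=\sqrt{1-|x-(p,q)|^2}$ and $S_2=\sqrt{1-|x+(p,q)|^2}$. Your configuration needs the top on $\partial B(c_0,1)$, i.e.\ $r+S_1\le -r+S_2$, and the bottom on $\partial B(-c_0,1)$, i.e.\ $-r-S_2\ge r-S_1$. These give $S_2-S_1\ge 2r$ and $S_1-S_2\ge 2r$, which is impossible since $r=0.36>0$. The true configuration is the reverse. At $x_0$ one has $S_1\approx 0.5346$ and $S_2\approx 0.1951$, and
\[ -r-S_2\approx-0.555<r-S_1\approx-0.1746<-r+S_2\approx-0.1649<r+S_1\approx 0.895. \]
So the chord is $[r-S_1,\,-r+S_2]$: its lower end is on the lower cap of $B(c_0,1)$ and its upper end on the upper cap of $B(-c_0,1)$. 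These strict inequalities are also what show $x_0$ is interior to the projection of $L$ and that $h$ is smooth nearby. The half-chord is therefore $h=\tfrac12(S_1+S_2)-r$, not the displayed $\tfrac12(S_1+S_2)$; your garbled display is not half the chord length under either configuration. Consequently $z=h(x_0)\approx 0.0049$, whereas your $h(x_0)\approx 0.365$ is not a boundary point of $S_{e_3}(L)$.

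\textbf{The error is harmless for the curvature.} The mistake changes $h$ only by an additive constant, so $\nabla h$, $\partial_{11}h$ and your curvature value are unaffected. Carrying out the evaluation gives
\[ \kappa_{e_1}\approx 6.322/6.510\approx 0.971, \]
a margin of about three percent, so ordinary decimal precision is enough. Once the caps are identified correctly, the rest of your argument goes through, including the observation that a $C^2$ boundary point of a body in $\S_3$ has all normal curvatures at least $1$.
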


\begin{proof}
Denote $c_0=(x_0,y_0,z_0)$. Then
$B(c_0,1)=\{ (x,y,z):
-f_d(x,y) \le z \le f_u(x,y)
\}$ and likewise $B(-c_0,1)=\{ (x,y,z):
-g_d(x,y) \le z \le g_u(x,y)
\}$, where the functions $f_d,f_u:B\left((x_0,y_0),1 \right)\to\R$ and $g_d,g_u:B\left(-(x_0,y_0),1 \right)\to\R$ are given by
\begin{eqnarray*}
    f_u(x,y) &=& z_0 + \sqrt{1 - (x-x_0)^2 - (y-y_0)^2}\\    
    -f_d(x,y) &=& z_0 - \sqrt{1 - (x-x_0)^2 - (y-y_0)^2}\\
    g_u(x,y) &=& -z_0 + \sqrt{1 - (x+x_0)^2 - (y+y_0)^2}\\
    -g_d(x,y) &=& -z_0 - \sqrt{1 - (x+x_0)^2 - (y+y_0)^2}.
\end{eqnarray*} 
 This means $L = \{ (x,y,z): \max(-f_d(x,y), -g_d(x,y))
\le z \le
\min(f_u(x,y), g_u(x,y))
\}$. (See Figure \ref{fig:functionsonlens}.)
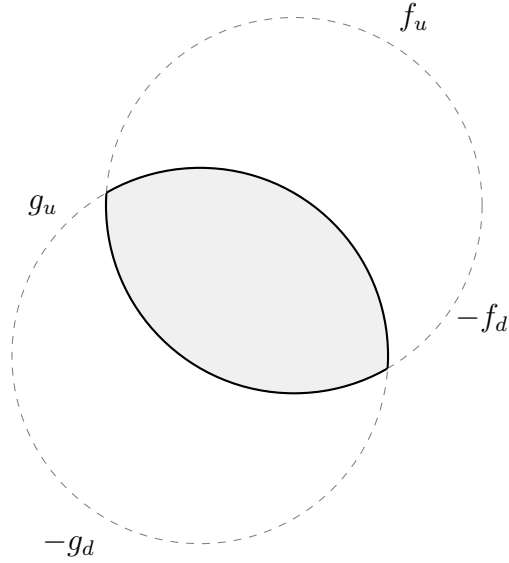
\begin{figure}[H]
\centering
\begin{tikzpicture}[scale=2.5]
  \centering
  \coordinate (C1) at (0.25, 0.4);  
  \coordinate (C2) at (-0.25, -0.4); 
  \def\r{1}
  \begin{scope}
    \clip (C1) circle (\r);
    \fill[gray!12] (C2) circle (\r);
  \end{scope}
  \draw[dashed, gray] (C1) circle (\r);
  \draw[dashed, gray] (C2) circle (\r);
  \begin{scope}
    \clip (C1) circle (\r);
    \draw[line width=0.8pt] (C2) circle (\r);
  \end{scope}
  \begin{scope}
    \clip (C2) circle (\r);
    \draw[line width=0.8pt] (C1) circle (\r);
  \end{scope}
  \node[right] at ($(C1)+(0,\r)+ (0.5,0)$) {$f_u$};
  \node[right] at ($(C1)+(0.8,-\r+0.4)   $) {$-f_d$};
  \node[left]  at ($(C2)+(0,\r)+ (-0.7,-0.2)$) {$g_u$};
  \node[left]  at ($(C2)+(0,-\r)+ (-0.5,0)$) {$-g_d$};
\end{tikzpicture}
\caption{Illustrating the functions}
  \label{fig:functionsonlens}
\end{figure}

Before specifying the values for $x_0, y_0, z_0$, let us mention that in general, points $(x,y)$ in $P_{e_3}(L)$ can be of two different forms. It could be that $-g_d\le -f_d \le f_u \le g_u$ (or similarly $-f_d\le -g_d \le g_u \le f_u$), in which case the fiber above $(x,y)$ is simply the intersection of $(x,y,0) +\RR e_3$ with one of the balls (as is the case for some points in Figure \ref{fig:lens2}). In this case, no curvature is compromised since we are dealing with a region where the set being symmetrized is locally a ball. 

The second and more interesting case is when $-g_d\le -f_d\le g_u \le f_u$ (or similarly $-f_d\le -g_d\le f_u \le g_u$). In such a case, the fiber above $(x,y)$ has one end on $\partial B(c_0, 1)$ and the other end on $\partial B(-c_0, 1)$, as is the case for {\em all} points in Figure \ref{fig:lens1}.

Our choice of the parameters $(x_0, y_0, z_0)$ corresponds to the latter case, meaning that $-g_d\le -f_d\le g_u \le f_u$ holds at the point $(x,y) = (0.4154,0.7262)$, and in fact this holds for all points in $P_{e_3}(L)$ (though the proof does not rely on it).
Indeed, 
\[
-g_d(x,y) \approx -0.56 \, < \, 
-f_d(x,y) \approx -0.175\, < \, 
 g_u(x,y) \approx -0.165\, < \, 
 f_u(x,y) \approx 0.89          
\]  
This also shows that our point $(x,y)$ lies in the projection of the lens. In fact, $(x,y,0)$ is very close to $\partial P_{e_3}(L)$; note the tiny difference between $g_u$ and $-f_d$, meaning that the fiber $((x,y, 0)+ \RR e_3) \cap L = [(x,y,-f_d(x,y)), (x,y,g_u(x,y))]$ is very short. The corresponding fiber in the Steiner symmetral $S_{e_3}(L)$ is $[(x,y,-h(x,y)), (x,y,h(x,y))]$, where $h =\frac{f_d + g_u}{2}$. 

The sectional curvature (see \cite{schneider2013convex}) at the point $(w,H(w))$ on the graph of a function $H:\RR^{2}\to\RR$, in direction $e_1$, is given by 
\begin{equation}\label{eq:sec-curv}
\kappa_H(w,e_1)  = \frac{ \left|(\nabla^2 H(w))_{1,1}\right|}{\sqrt{1+\|\nabla H(w)\|_2^2}\left(1 +\iprod{e_1}{\nabla H(w)}^2\right)}=
\frac{ \left|(\nabla^2 H(w))_{1,1}\right|}{\psi(\nabla H(w))},
\end{equation}
where the function $\psi:\RR^2\to\RR^+$ is defined by $\psi(s,t)= \sqrt{1+s^2+t^2}(1+s^2)$. (This is well known, see e.g. \cite[Sec.4-3]{doCarmo} or \cite[Vol. 3, p. 38]{Spivak}.) Since $g_u$ and $f_d$ correspond to (translated) spheres, for them this expression must equal to $1$, and we only need to use \eqref{eq:sec-curv} for the case $H=h$. We can easily compute
 \begin{eqnarray*}
  \nabla   f_d(x,y) &=& -\frac{((x-x_0), (y-y_0) )}{
  \sqrt{1 - (x-x_0)^2 - (y-y_0)^2}} \approx   -(1.300, 0.900)\\ 
   \nabla  g_u(x,y) &=& -\frac{((x+x_0), (y+y_0) )}{
  \sqrt{1 - (x+x_0)^2 - (y+y_0)^2}}\approx -(0.697, 4.977)
\end{eqnarray*} 
where we use $\approx$ to indicate rounding to the displayed decimal. For $h = (f_d+g_u)/2$ we have $\nabla h  = (\nabla f_d+\nabla g_u)/2$ and $\nabla^2 h  = (\nabla^2 f_d+\nabla^2 g_u)/2$, so we may use \eqref{eq:sec-curv} to get
\begin{eqnarray*}\label{eq:Steiner-wrong1}
	\kappa_h(w,e_1) 
	&=&
    \frac{ \left| (\nabla^2 h(w))_{1,1}\right|}{\psi(\nabla h(w))} =
	\frac
    {  \frac12 \left|\left(\nabla^2 f_d(w)\right)_{1,1}+  (\nabla^2 g_u(w))_{1,1}\right| }
    {\psi\left(\frac{\nabla f_d(w) + \nabla g_u(w)}{2} \right)}.
\end{eqnarray*}
Finally, we arrive at the main observation, which is that $\psi$ is {\em not convex} on $\RR^2$  (this explains why a proof such as that of Theorem \ref{thm:InR2Steiner-preserves-class} fails in $\RR^3$; compare this to \eqref{eq:Steiner-is-in-S_2}, where convexity of the denominator was crucial).
In particular, we chose our parameters so that at the point $w = (x,y)\in \RR^2$ we have
\begin{equation}\label{eq:psi-not-cvx-to-verify}
\psi \left(\frac{\nabla f_d (w) + \nabla g_u(w)}2\right)
\approx 6.51
>
6.32 \approx
\frac{\psi(\nabla f_d(w))+\psi(\nabla g_u(w))}2 .
\end{equation}
Combining this with \eqref{eq:sec-curv} for the concave functions $H=g_u$ and $H=f_d$ (together with $\kappa_{g_u}(w,e_1) =\kappa_{f_d}(w,e_1)=1$), we see that  
\begin{eqnarray*} 
1 & = &\frac{ \frac12 \left|\left(\nabla^2 f_d(w)\right)_{1,1} +  (\nabla^2 g_u(w))_{1,1}\right| }
{ \frac12 \left| \left(\nabla^2 f_d(w)\right)_{1,1}+ (\nabla^2 g_u(w))_{1,1} \right|} =
\frac
{\frac12 \left|\left(\nabla^2 f_d(w)\right)_{1,1} +  (\nabla^2 g_u(w))_{1,1}\right|}
{\frac{1}{2} \left(\psi(\nabla f_d(w))+ \psi(\nabla g_u(w)))\right)} \\ 
&=&
\kappa_h(w, e_1)\cdot \frac
{\psi \left(\frac{\nabla f_d (w) + \nabla g_u(w)}2\right)}
{\left(\frac{\psi(\nabla f_d(w))+ \psi(\nabla g_u(w)))}{2}\right)}
\approx \frac{6.51}{6.32}\cdot \kappa_h(w, e_1)
> \kappa_h(w, e_1),
\end{eqnarray*}
as claimed.

\end{proof}

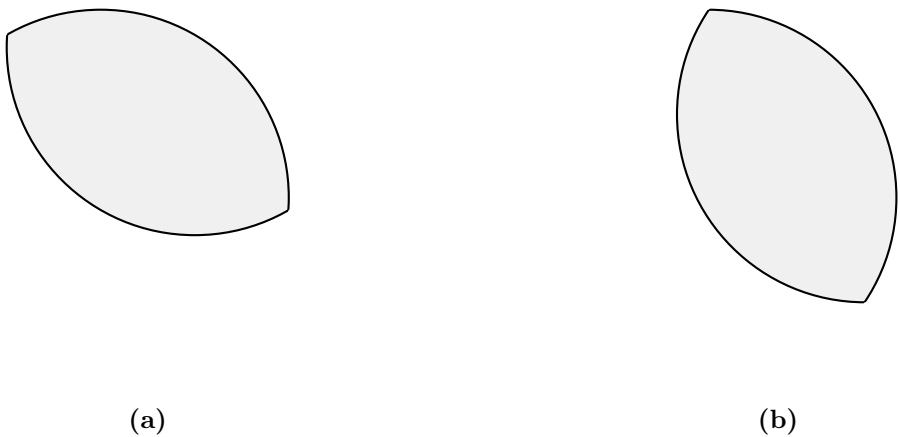
\begin{figure}[H]
  \centering
  \begin{subfigure}[t]{0.45\textwidth}
    \centering
    \begin{tikzpicture}[scale=2.5]
      \def\r{1}
      \coordinate (C1) at (0.25, 0.4);
      \coordinate (C2) at (-0.25, -0.4);
      \begin{scope}
        \clip (C1) circle (\r);
        \fill[gray!12] (C2) circle (\r);
      \end{scope}
      \begin{scope}
        \clip (C1) circle (\r);
        \draw[line width=0.8pt] (C2) circle (\r);
      \end{scope}
      \begin{scope}
        \clip (C2) circle (\r);
        \draw[line width=0.8pt] (C1) circle (\r);
      \end{scope}
    \end{tikzpicture}
    \caption{}
    \label{fig:lens1}
  \end{subfigure}
  \hfill
  \begin{subfigure}[t]{0.45\textwidth}
    \centering
    \begin{tikzpicture}[scale=2.5]
      \pgfmathsetmacro{\ang}{-30}
      \def\r{1}
      \coordinate (C1r) at ({0.25*cos(\ang) - 0.4*sin(\ang)}, {0.25*sin(\ang) + 0.4*cos(\ang)});
      \coordinate (C2r) at ({-0.25*cos(\ang) - (-0.4)*sin(\ang)}, {-0.25*sin(\ang) + (-0.4)*cos(\ang)});

      \begin{scope}
        \clip (C1r) circle (\r);
        \fill[gray!12] (C2r) circle (\r);
      \end{scope}
      \begin{scope}
        \clip (C1r) circle (\r);
        \draw[line width=0.8pt] (C2r) circle (\r);
      \end{scope}
      \begin{scope}
        \clip (C2r) circle (\r);
        \draw[line width=0.8pt] (C1r) circle (\r);
      \end{scope}
    \end{tikzpicture}
    \caption{}
    \label{fig:lens2}
  \end{subfigure}

  \caption{Two planar lenses}
  \label{fig:bothlenses}
\end{figure}

\subsection{How flat can counterexamples get in dimension $n\ge 3$}\label{sec:bad-stuff}

In the explicit example we provided, the radius of curvature in the symmetrized lens changed to approximately $1.03$. Thus one may wonder whether some approximate-convexity may hold, especially since a sketch of $\psi(s,t)=\sqrt{1+s^2+t^2}(1+s^2)$ defined before, might seem ``almost'' convex, at a glance. This however is not true, and for specific $(s,t)$ its convexity may fail very significantly, as the following lemma shows.

\begin{lem}\label{lem:psi-VERY-concave}
   Let $\psi:\RR^2\to\RR^+$ be defined by $\psi(s,t)= \sqrt{1+s^2+t^2}(1+s^2)$ as above. For any $R>0$ there exist $P_0=(t_0, s_0), P_1 = (t_1, s_1)$, satisfying
   \begin{equation}\label{eq:very-not-convex} \psi\left(\frac{P_0+P_1}{2}\right) \ge R \cdot \left(\frac{\psi(P_0) + \psi(P_1)}{2}\right).\end{equation}
\end{lem}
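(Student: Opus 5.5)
We want points where $\psi$ is small at $P_0$ and $P_1$ but large at their midpoint. Since $\psi(s,t)=\sqrt{1+s^2+t^2}\,(1+s^2)$, the factor $(1+s^2)$ penalizes the first coordinate much more than the second. So we take two points that have small first coordinate and large, opposite second coordinates, and whose midpoint has a large first coordinate. Averaging then kills the $t$-contribution but keeps a large $s$.

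Concretely, for a large parameter $M>0$, set
\[
P_0=(0,\,M^2+M),\qquad P_1=(2M,\,-(M^2+M)).
\]
Their midpoint is $(M,0)$.

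\textbf{Midpoint.} We have
\[
\psi(M,0)=(1+M^2)^{3/2}\sim M^3 .
\]

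\textbf{First endpoint.} We have
\[
\psi(P_0)=\sqrt{1+(M^2+M)^2}\sim M^2 .
\]

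\textbf{Second endpoint.} This one has first coordinate $2M$, so its factor $1+4M^2$ is already of order $M^2$, and the square root contributes another factor of order $M^2$. Hence $\psi(P_1)\sim 4M^4$, which is too big, and the asymmetric choice fails.

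We need both endpoints to have small first coordinate while the midpoint has large first coordinate. That is impossible, since the midpoint's first coordinate is the average of the endpoints' first coordinates. So we must balance the two factors more carefully.

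Take instead
\[
P_0=(a,\,b),\qquad P_1=(a,\,-b),
\]
with midpoint $(a,0)$. Then
\[
\psi(a,0)=(1+a^2)^{3/2},\qquad \psi(P_0)=\psi(P_1)=\sqrt{1+a^2+b^2}\,(1+a^2)\ \ge\ \psi(a,0).
\]
So this symmetric choice gives no gain either. We need a trade-off between $s$ and $t$.

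The right idea is that along the line $t=-\lambda s$ the function $\psi$ grows like $|s|^3\sqrt{1+\lambda^2}$. If instead the endpoints lie where $s$ is near $0$, then $\psi\approx\sqrt{1+t^2}$ grows only linearly in $t$. Using this, we take
\[
P_0=(\eps,\,T),\qquad P_1=(2s-\eps,\,-T),
\]
with midpoint $(s,0)$. Then $\psi(P_1)$ is of order $s^2\,\max(s,T)$. Since $s^2T$ is at least comparable to $\psi(s,0)\sim s^3$ once $T\gtrsim s$, this choice still fails.

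Any scheme in which $\psi$ is nearly linear in $t$ near both endpoints runs into the same problem. We therefore test convexity of $\psi$ directly, by computing its Hessian and looking for a direction of strong concavity.

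Write $r=\sqrt{1+s^2+t^2}$. The Hessian entry in $t$ is $\psi_{tt}=(1+s^2)(1+s^2)/r^3>0$. The off-diagonal entry is $\psi_{st}=st/r+2st/r-\dots$; in the regime $t\gg s$, concavity appears in a mixed direction.

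The plan is then to find a point where the Hessian has a negative eigenvalue of size comparable to $\psi$ itself. We take a chord of length $\delta$ along that eigendirection, centered at the point. Second-order Taylor expansion gives a midpoint excess of $-\tfrac12\lambda_{\min}\delta^2$, but only a constant-factor ratio, not an arbitrary $R$.

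To get an arbitrary $R$, the last step is a scaling family along which the ratio in \eqref{eq:very-not-convex} tends to infinity. Which family achieves this is not yet settled, and finding it is the main obstacle.
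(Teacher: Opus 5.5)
Your proposal does not prove the lemma. It ends by saying that the family along which the ratio in \eqref{eq:very-not-convex} becomes unbounded is ``not yet settled,'' and producing that family is the whole content of the statement. Moreover, the explicit attempts you make fail for a structural reason that no tuning of parameters can repair. In each of them the second coordinates of $P_0$ and $P_1$ are opposite, so the midpoint lies on the axis $t=0$. But $\psi(s,t)\ge\psi(s,0)=(1+s^2)^{3/2}$, and $s\mapsto(1+s^2)^{3/2}$ is convex. Hence for $P_i=(s_i,t_i)$ with $t_0+t_1=0$,
\[
\frac{\psi(P_0)+\psi(P_1)}{2}\ \ge\ \frac{(1+s_0^2)^{3/2}+(1+s_1^2)^{3/2}}{2}\ \ge\ \Bigl(1+\bigl(\tfrac{s_0+s_1}{2}\bigr)^2\Bigr)^{3/2}=\psi\Bigl(\frac{P_0+P_1}{2}\Bigr),
\]
so the ratio is at most $1$ for every such choice. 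Your heuristic that ``averaging kills the $t$-contribution'' therefore points in exactly the wrong direction. So does your premise that both endpoints must have small first coordinate. The Hessian discussion at the end is only a plan (the formula for $\psi_{st}$ is left unfinished), and it does not supply the missing family either.

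The missing idea is to exploit the mixed growth of the product $\sqrt{1+s^2+t^2}\,(1+s^2)$, which behaves like $s^2t$ when $t\gg s\gg 1$. Put the large first coordinate in one endpoint and the large second coordinate in the other, so that neither endpoint has both while the midpoint does. This is what the paper does: for $k>1$ and $t>0$ take $P_0=(2t,0)$ and $P_1=(0,2t^k)$, with midpoint $(t,t^k)$. Then $\psi(P_0)=(1+4t^2)^{3/2}\sim 8t^3$ and $\psi(P_1)=\sqrt{1+4t^{2k}}\sim 2t^k$, whereas $\psi(t,t^k)=\sqrt{1+t^2+t^{2k}}\,(1+t^2)\sim t^{k+2}$. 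So the ratio grows like $t^{k-1}$ for $k\in(1,3]$ and like $t^2$ for $k\ge 3$. Concretely, for $k=2$ and $t\ge 1$ one has $\psi(t,t^2)\ge t^4$, $\psi(P_0)\le 5^{3/2}t^3$ and $\psi(P_1)\le\sqrt{5}\,t^2\le\sqrt{5}\,t^3$. The ratio is therefore at least $2t^4/(6\sqrt{5}\,t^3)=t/(3\sqrt{5})$, which exceeds any prescribed $R$ once $t$ is large.
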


\begin{proof}
Fix $k>1$, and for any $t>0$ let $P_0  = (2t,0)$ and $P_1 = (0, 2t^{k})$, so that $\frac{1}{2}{(P_0 + P_1)} = (t,t^k)$. Next define 
\[ R(t) := \frac{\psi\left(\frac{P_0+P_1}{2}\right)}{\frac{\psi(P_0) + \psi(P_1)}{2}} 
=  \frac{2\sqrt{1+ t^2 + t^{2k}}(1+t^2)}{(1+4t^2)^{3/2} + \sqrt{1+4t^{2k}}}.
\]
For $k\in(1,3]$, $R(t)$ grows like $t^{k-1}$. For $k\ge 3$, $R(t)$ grows like $t^2$. In both cases we have
\[ 
\lim_{t\to \infty} R(t) = \lim_{t\to \infty}\frac{2\sqrt{1+ t^2 + t^{2k}}(1+t^2)}{(1+4t^2)^{3/2} + \sqrt{1+4t^{2k}}}
= \infty.
\]
Thus, picking $k>1$ and large enough $t$ we get points satisfying \eqref{eq:very-not-convex}. 
\end{proof}

To utilize this fact and construct more ``bad lenses'' we next 
show that we can find lenses with any prescribed outer normals at a given point. More precisely.
\begin{lem}\label{lem:lens-w-every-gradients}
Let $p,u,v\in \R^{n-1}$. Then, denoting by $P$ the projection from $\R^n$ to $\R^{n-1}$, there exists a lens $L\in\S_n$ of the form $\left\{(x,z)\,:\,x\in P(L),\,f(x)\le z \le g(x) \right\}$ such that $p \in P(L)$, $u=\nabla f(p)$ and $v=\nabla g(p)$.
\end{lem}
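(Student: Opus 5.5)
We will construct the lens explicitly as the intersection of two unit balls, one contributing the lower boundary $f$ and the other the upper boundary $g$ near $p$. A lower cap of a unit sphere centred at $(c,\gamma)\in\R^{n-1}\times\R$ is the graph of $z=\gamma-\sqrt{1-\|x-c\|^2}$, with gradient $(x-c)/\sqrt{1-\|x-c\|^2}$. An upper cap of a unit sphere centred at $(c',\gamma')$ is the graph of $z=\gamma'+\sqrt{1-\|x-c'\|^2}$, with gradient $-(x-c')/\sqrt{1-\|x-c'\|^2}$.

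\textbf{Step 1: matching the gradients.} The map $y\mapsto y/\sqrt{1-\|y\|^2}$ is a bijection from the open unit ball of $\R^{n-1}$ onto $\R^{n-1}$, with inverse $w\mapsto w/\sqrt{1+\|w\|^2}$. So we set
\[
c = p - \frac{u}{\sqrt{1+\|u\|^2}}, \qquad c' = p + \frac{v}{\sqrt{1+\|v\|^2}}.
\]
Then $\|p-c\|<1$ and $\|p-c'\|<1$, the lower cap of the first sphere has gradient $u$ at $p$, and the upper cap of the second has gradient $v$ at $p$. This holds for any choice of the heights $\gamma,\gamma'$.

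\textbf{Step 2: choosing the heights.} We want $(p,z)$ to lie in the lens for a short segment of $z$, with lower end on the lower cap of $B((c,\gamma),1)$ and upper end on the upper cap of $B((c',\gamma'),1)$, and with no interference from the other two caps. We fix $\gamma=0$ and put $a=\sqrt{1-\|p-c\|^2}$ and $b=\sqrt{1-\|p-c'\|^2}$, both positive. The fiber of the first ball over $p$ is $[-a,a]$, and that of the second is $[\gamma'-b,\gamma'+b]$. Choosing $\gamma'=a-b+\delta$ with small $\delta>0$, the second fiber starts just below $a$. Then $-a<\gamma'-b$ (because $a,\delta$ are such that $-a<a-2b+\delta$ would need checking), so the intersection over $p$ is $[-a,\gamma'+b]$ only if the ordering comes out right. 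This is where the case analysis lies, and it goes wrong: the intersection is actually $[\max(-a,\gamma'-b),\min(a,\gamma'+b)]$.

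The cleaner choice is to take $\gamma'$ slightly less than the value making the two fibers disjoint in the opposite arrangement. We need the first ball's fiber to be the lower one, $[-a,a]$, and the second to sit above it and overlap slightly: $\gamma'-b<a$, $\gamma'-b>-a$ and $\gamma'+b>a$. The intersection over $p$ is then $[\gamma'-b,a]$, whose lower end is on the \emph{second} sphere's lower cap and whose upper end is on the \emph{first} sphere's upper cap.

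\textbf{Step 3: redoing the assignment.} Given this arrangement, the lower cap must come from the sphere sitting higher. So we redefine: the higher ball, centred at $(c,\gamma)$, supplies the lower cap with gradient $u$, and the lower ball, centred at $(c',\gamma')$, supplies the upper cap with gradient $v$. We then take $\gamma=\gamma'+a'+b'-\delta$ with $0<\delta<\min(2a',2b')$, where $a'=\sqrt{1-\|p-c\|^2}$ and $b'=\sqrt{1-\|p-c'\|^2}$. The fibers over $p$ are $[\gamma-a',\gamma+a']$ and $[\gamma'-b',\gamma'+b']$, and they overlap exactly in $[\gamma-a',\gamma'+b']$, a segment of length $\delta>0$. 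Since $\gamma+a'>\gamma'+b'$ and $\gamma'-b'<\gamma-a'$, the endpoints lie on the intended caps.

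\textbf{Step 4: local description.} Because the inequalities in Step 3 are strict, they persist in a neighbourhood of $p$. Hence near $p$ the lens $L$ is $\{f\le z\le g\}$, with $f$ the lower cap of the higher sphere and $g$ the upper cap of the lower sphere. Globally, $f$ and $g$ are taken as the lower and upper boundary functions of $L$ over $P(L)$. Since $p$ lies in the interior of $P(L)$, they coincide with these caps near $p$, so $\nabla f(p)=u$ and $\nabla g(p)=v$. Finally, $L$ is an intersection of unit balls, so $L\in\S_n$.

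The main obstacle is the bookkeeping in Steps 2–3: the ball supplying the lower boundary must be the higher one, and the overlap must be small enough that neither ball's fiber is contained in the other's. The choice of $\delta$ handles both.
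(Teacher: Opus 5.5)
Your construction is correct and is the paper's approach, once you delete the aborted first attempt in Step 2. In both, the horizontal centres come from inverting the gradient map of the sphere caps. The ball supplying the lower cap is then lifted above the other so the fibres over $p$ cross with positive overlap. The paper's height $z_0=\max(S(0),g_u(0))$ is your choice with $\gamma'=0$ and $\delta=\min(a',b')$.
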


\begin{proof}
Without loss of generality we may assume $p = 0\in \RR^{n-1}$. As before, a unit ball centered at $(x_0,z_0)\in \RR^n$ is given by $\left\{(x,z)\,:\, -f_d(x)\le z\le f_u(x) \right\}$, for the functions $f_d,f_u,S:B(x_0, 1)\to\R$ that are given by $f_d(x) = -z_0 + S(x)$ and $f_u(x) = z_0 + S(x)$, where $S(x)=\sqrt{1-\|x-x_0\|^2}$ is half the secant length. The mapping $\nabla S$ from ${\rm int}(B(x_0, 1))\subset \RR^{n-1}$ to $\RR^{n-1}$ is a bijection, thus we may find $x_0$ such that for every $z_0$ we have $-\nabla  f_d (0) = u$.

Similarly, we consider a second ball, centered at $(y_0,0)\in \R^n$, which is given by $\left\{(x,z)\,:\, -g_d(x)\le z\le g_u(x) \right\}$, for $g_d(x) = g_u(x) = \sqrt{1-\|x-y_0\|^2}$. Again, we may find $y_0$ such that $\nabla  g_u (0) = v$.
Clearly $0\in {\rm int}( B(x_0, 1)\cap B(y_0, 1))\subset \R^{n-1}$.

We are left with picking $z_0$ such that the lens $L= B((x_0, z_0), 1)\cap B((y_0, 0), 1)$ has $(0, -f_d(0))$ and $(0, g_u(0))$ on its boundary. Clearly, the intersection of $L$ with the fiber $\R e_n$ is given by $\left\{(0,z)\,:\, \max(-f_d(0),-g_d(0))\le z \le \min(f_u(0),g_u(0))  \right\}$.

Since $0\in {\rm int}(B(y_0, 1))$, the segment $\RR e_n \cap B((y_0, 0), 1)=[-g_d(0),g_u(0)]e_n$ has positive length. Similarly the segment $\RR e_n \cap B((x_0, z_0), 1)=[-f_d(0),f_u(0)]e_n$ has positive length, and in fact the latter is also given by $[z_0-S(0),z_0+S(0)]e_n$. By choosing $z_0$ to be some positive value (e.g. $z_0=\max\left( S(0),g_u(0)\right)=\sqrt{1-\min\left( \|x_0\|^2,\|y_0\|^2\right)}~$), we can simultaneously have $-g_d(0)<-f_d(0)$ and $g_u(0)<f_u(0)$, guaranteeing that the lens $L$ has $(0, -f_d(0))$ and $(0, g_u(0))$ on its boundary, as required.
\end{proof}

With these two lemmas in hand, we may show that Steiner symmetrizations do not preserve the class of ball bodies in the following strong sense; any  assumption on large curvature of a ball body $K$, is not sufficient for deducing that its Steiner symmetral $S_u(K)$ is a ball body.

Fix some large $\kappa$, and consider the subclass of $\S_n$ consisting of bodies which are extremely curved, meaning all sectional curvatures are greater than $\kappa$. One may still find such a body $K$ and a direction $u$, such that the Steiner symmetral $S_u(K)$ is not a ball body.  

Since curvature is $1$-homogeneous, this amounts to finding, for any $\varepsilon>0$, a ball body $K$, whose symmetral $S_u(K)$ has a sectional curvature smaller than $\varepsilon$. We do this with a lens  (so the ``extremely curved'' body mentioned above, would be the intersection of two small balls of radius $\varepsilon\in (0,1)$). 

\begin{prop}
For any $\varepsilon>0$, there exists a lens $L\in \S_n$, such that the Steiner symmetral $S_{e_n}(L)$ has a point at which some sectional curvature is smaller than $\varepsilon$.
\end{prop}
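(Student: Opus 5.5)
We combine Lemma \ref{lem:psi-VERY-concave} with Lemma \ref{lem:lens-w-every-gradients}, following the computation in the proof of Theorem \ref{thm:counter1}. It suffices to work in $\RR^3$, i.e.\ with gradients in $\RR^2$. For higher $n$ we take the same construction with gradients lying in a two-dimensional coordinate subspace of $\RR^{n-1}$, and look at the sectional curvature in direction $e_1$. The formula \eqref{eq:sec-curv} carries over verbatim, with $\psi(\nabla H)=\sqrt{1+\|\nabla H\|^2}(1+\langle e_1,\nabla H\rangle^2)$.

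First, given $\varepsilon>0$, we apply Lemma \ref{lem:psi-VERY-concave} with $R=1/\varepsilon$. This yields $P_0,P_1\in\RR^2$ satisfying \eqref{eq:very-not-convex}. Next we apply Lemma \ref{lem:lens-w-every-gradients} with $p=0$ and prescribed gradients. We will need the lower boundary function $f$ and the upper boundary function $g$ at $p$ to satisfy $\nabla(-f)(0)=P_0$ and $\nabla g(0)=P_1$; with the notation of Theorem \ref{thm:counter1}, this means $\nabla f_d(0)=P_0$ and $\nabla g_u(0)=P_1$. The lemma gives exactly this freedom.

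The Steiner symmetral $S_{e_n}(L)$ has upper boundary $h=(f_d+g_u)/2$ above points near $0$. We must check that near $0$ the fiber endpoints stay on the same two spheres, so that $h$ is smooth there. This holds because the construction in Lemma \ref{lem:lens-w-every-gradients} gives strict inequalities $-g_d(0)<-f_d(0)$ and $g_u(0)<f_u(0)$, and these persist in a neighborhood by continuity. Both $f_d$ and $g_u$ describe unit spheres, so \eqref{eq:sec-curv} gives $|(\nabla^2 f_d)_{11}|=\psi(P_0)$ and $|(\nabla^2 g_u)_{11}|=\psi(P_1)$ at $0$. Both Hessians have the same sign by concavity, hence
\[
\kappa_h(0,e_1)=\frac{\tfrac12\bigl(\psi(P_0)+\psi(P_1)\bigr)}{\psi\bigl(\tfrac{P_0+P_1}{2}\bigr)}\le \frac1R=\varepsilon .
\]
This is exactly the chain of equalities displayed at the end of the proof of Theorem \ref{thm:counter1}. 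Taking $R$ slightly larger than $1/\varepsilon$ gives the strict inequality $\kappa_h(0,e_1)<\varepsilon$.

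The main obstacle is the compatibility of the gradient sign conventions. Lemma \ref{lem:psi-VERY-concave} is stated for arbitrary points, and Lemma \ref{lem:lens-w-every-gradients} is stated for gradients of $f$ and $g$. We must make sure that the $u,v$ fed into the latter equal $-P_0$ and $P_1$ with the correct signs, so that $\nabla h(0)=(P_0+P_1)/2$. Since $\psi$ is even in each coordinate, sign flips of a whole vector are harmless. Still, the average must be of $\nabla f_d$ and $\nabla g_u$ exactly as they enter $h$, and this is where care is needed. A minor additional point is that all gradients must be finite, which holds because $0$ lies in the interior of both projected balls.
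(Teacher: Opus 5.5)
Your proposal is correct and is essentially the paper's proof: you feed the points from Lemma \ref{lem:psi-VERY-concave} into Lemma \ref{lem:lens-w-every-gradients}, with lower-boundary gradient $-P_0$ and upper-boundary gradient $P_1$, and then evaluate the normal curvature of $h=(f_d+g_u)/2$ at $0$ in direction $e_1$, exactly as the paper does with $u=(2t,0,\dots,0)$ and $v=(0,\dots,0,2t^2)$. Your extra checks fill in details the paper leaves implicit: that the fiber endpoints stay on the same two spheres near $0$, and that the two Hessian entries have the same sign so their absolute values add.
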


\begin{proof}
By Lemma \ref{lem:psi-VERY-concave}, there exists $t>0$ such that the points $P_0=(2t,0), P_1=(0,2t^2)$ satisfy $\frac{\psi(P_0) + \psi(P_1)}{2} < \varepsilon\cdot \psi\left(\frac{P_0+P_1}{2}\right)$. Let $u=(2t,0,\dots,0), v=(0,\dots,0,2t^2)\in \R^{n-1}$. By Lemma \ref{lem:lens-w-every-gradients}, there exist concave functions $f, g$ defined on some convex $T\subset \R^{n-1}$ and a lens $L=\left\{(x,z)\,:\, x\in T,\,-f(x)\le z \le g(x)  \right\}$, such that $0\in T$, $-\nabla  f (0) = -u$, and $\nabla  g (0) = v$.

The Steiner symmetral $S_{e_n}(L)$ is given by $\left\{(x,z)\,:\, x\in T,\,-h(x)\le z \le h(x)  \right\}$ for $h = \frac{f + g}{2}$. Similarly to \eqref{eq:sec-curv}, the sectional curvature of $S_{e_n}(L)$ in the points $(x,\pm h(x))$, in direction $e_1$, is given  by 
\[ \kappa_h (x,e_1) = \frac{|(\nabla^2 h (x))_{1,1}|}{\psi\left(\iprod{e_1}{\nabla h(x)},\sqrt{\|\nabla h(x)\|^2 - \iprod{e_1}{\nabla h(x)}^2}\right)}.
\]
At the point $x=0$, we get
\[ \kappa_h (0,e_1) = \frac{|(\nabla^2 h (0))_{1,1}|}{\psi\left(t,t^2\right)},
\]
since $\nabla h(0)=\frac{u+v}2=(t,0,\dots,0,t^2)$ (recall, $\nabla  f (0) = u$, and $\nabla  g (0) = v$). As in the proof of Theorem \ref{thm:counter1}, the curvatures of the spheres $f,g$ equal $1$, so we have 
\[|(\nabla^2 f (0))_{1,1}|=\psi\left(\iprod{e_1}{\nabla f(0)},\sqrt{\|\nabla f(0)\|^2 - \iprod{e_1}{\nabla f(0)}^2}\right)
=\psi(P_0),\]
\[|(\nabla^2 g (0))_{1,1}|=\psi\left(\iprod{e_1}{\nabla g(0)},\sqrt{\|\nabla g(0)\|^2 - \iprod{e_1}{\nabla g(0)}^2}\right)
=\psi(P_1).\]
Combining the above, we get
\[
\kappa_h (0,e_1) = \frac{|(\nabla^2 h (0))_{1,1}|}{\psi\left(t,t^2\right)}
= \frac{\frac{|(\nabla^2 f (0))_{1,1}|+|(\nabla^2 g (0))_{1,1}|}2}{\psi\left(\frac{P_0+P_1}{2}\right)}
= \frac{\frac{\psi(P_0)+\psi(P_1)}2}{\psi\left(\frac{P_0+P_1}{2}\right)}
<\varepsilon.
\]
\end{proof}

\section*{Appendix}
This section is devoted to the proof of the technical statement below, which can be of use when considering ball body inequalities, and which we have used in our second proof of Theorem \ref{thm:InR2Steiner-preserves-class}. 

\begin{prop}[Denseness of smooth bodies in $\S_n$]\label{thm:dense-are-the-smooth}
Let $n\in \mathbb N$. For any $K\in \S_n$ there exists a sequence $(K_m)_{m\in \mathbb N}$ with $K_m\in \S_n$ which are $C^\infty$ smooth convex bodies with $h_K\in C^\infty$, such that  $d_H(K_m,K)\to_{m\to \infty} 0$. 
\end{prop}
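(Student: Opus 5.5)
The idea is to use the summand characterization of $\S_n$: $K\in\S_n$ (nonempty, not a point, not $\RR^n$) if and only if there is $L\in\S_n$ with $K+L=B(0,1)$. Equivalently, $h_K+h_{L}=h_{B}=1$ on $S^{n-1}$, where $L=-K^c$. We smooth $K$ by Minkowski-adding a small ball and then shrinking. We also arrange that the complementary summand stays a genuine convex body; this is exactly what keeps the result in $\S_n$.

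First dispose of the degenerate cases. If $K=\{x\}$, take $K_m=B(x,1/m)$, which is smooth and lies in $\S_n$ since $B(x,r)+B(0,1-r)=B(x,1)$. The cases $\emptyset$ and $\RR^n$ are excluded or trivial. Otherwise write $K+L=B(0,1)$ with $L\in\S_n$. Fix $\delta\in(0,1)$ and set
\[
K_\delta=(1-\delta)\bigl(K*\phi\bigr)+\delta B(0,1),
\]
where $K*\phi$ stands for a smoothing of $K$ that stays a summand of $B$. The cleanest choice is via the linearity \eqref{eq:lin}. Averaging over rotations, $\tilde K=\int_{SO(n)} \rho(U)\,UK\,dU$ is a Minkowski integral, with $\rho$ a smooth approximate identity concentrated near the identity. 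Then $\tilde K+\int \rho(U)\,UL\,dU=\int\rho(U)\,UB\,dU=B$, so $\tilde K\in\S_n$, and $h_{\tilde K}=\int\rho(U)h_K(U^{-1}\cdot)\,dU$ is $C^\infty$ on the sphere. Likewise $h_{\tilde L}$ is smooth. This is the standard Schneider-type rotation-mean regularization, and it converges to $K$ in Hausdorff distance as $\rho$ concentrates.

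Smoothness of the support function alone does not make the body smooth, since $C^\infty$ boundary needs positive curvature radii bounded, i.e.\ strict convexity of $h$. This is fixed by the convex combination with balls. Set
\[
K_m=(1-\tfrac1m)\tilde K+\tfrac1m B(0,\tfrac12).
\]
Then $K_m+\bigl((1-\tfrac1m)\tilde L+\tfrac1m B(0,\tfrac12)\bigr)=B(0,1)$, so by \eqref{eq:lin} both summands lie in $\S_n$. Both now have support functions whose spherical Hessians are $\ge \frac1{2m}\,\mathrm{Id}$. Since $h_{K_m}+h_{L_m}=1$, the Hessian of $h_{K_m}$ restricted to the sphere (the radii of curvature) is bounded above by $1-\frac1{2m}<1$ and below by $\frac1{2m}>0$. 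Positive, bounded, smooth radii of curvature imply that $\partial K_m$ is a $C^\infty$ hypersurface with principal curvatures in $(1,2m]$. So $K_m$ is a smooth convex body with $h_{K_m}\in C^\infty$, consistent with membership in $\S_n$. Finally, $d_H(K_m,K)\le d_H(K_m,\tilde K)+d_H(\tilde K,K)\to0$, using that Hausdorff distance equals the sup-norm of the difference of support functions and choosing $\rho=\rho_m$ along a diagonal sequence.

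The main obstacle is checking that rotation-averaging gives a $C^\infty$ support function and converges to $K$. This is standard, since convolution on $SO(n)$ of the continuous function $h_K$ with a smooth kernel is smooth and converges uniformly. The other point to verify is that convolution commutes with the summand relation, which is just linearity of the Minkowski integral. The passage from smooth, strictly positive radii of curvature to a smooth boundary is classical (Schneider, Section 2.5), and I would cite it rather than reprove it.
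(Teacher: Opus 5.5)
Your proof is correct. It follows the paper's two-step outline: first a Minkowski-additive regularization that keeps the summand relation $K+L=B(0,1)$, then a convex combination with a small ball to make the radii of curvature positive. The regularizer in the first step is different.

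The paper uses Schneider's operator $T_m$ from Theorem 3.4.1 of \cite{schneider2013convex}, namely $h\mapsto\int h(x+\|x\|z)\varphi_m(\|z\|)\,dz$. It quotes from that theorem the additivity, rigid-motion equivariance, smoothness of the output, and the bound $d_H(K,T_mK)\le\eps_m$. This operator sends $B_2^n$ to $\alpha_m B_2^n$ with $\alpha_m$ not necessarily equal to $1$. So the paper has to rescale by $1/\alpha_m$ and absorb an extra Hausdorff error.

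Your rotation mean fixes $B(0,1)$ exactly. Hence $\tilde K+\tilde L=B(0,1)$ holds with no renormalization, and sublinearity of $h_{\tilde K}$ is immediate. Convergence follows from the Lipschitz bound $|h_K(U^{-1}x)-h_K(x)|\le \max_{y\in K}\|y\|\,\|U^{-1}x-x\|$.

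The cost is that smoothness is no longer quoted from a theorem, so you should say why $h_{\tilde K}$ is $C^\infty$ on $S^{n-1}$. Write $h_{\tilde K}(Ve)=\int_{SO(n)}\rho(VW^{-1})h_K(We)\,dW$; this is smooth in $V$. Then use smooth local sections of the submersion $V\mapsto Ve$ from $SO(n)$ onto $S^{n-1}$ (for $n\ge 2$; the case $n=1$ is trivial).

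Taking $B(0,\tfrac12)$ in both summands gives a bit more than the paper proves: principal curvatures in $(1,2m]$. The separate treatment of singletons is unnecessary, since $\{x\}+B(-x,1)=B(0,1)$ already puts them in the general case.
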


 To prove this proposition we employ standard approximation techniques which are explained\footnote{We would like to thank Daniel Hug for discussing approximations and for pointing us to the most relevant theorem in \cite{schneider2013convex}} clearly in \cite[Section 3.4]{schneider2013convex}.

\begin{proof}[Proof of Theorem \ref{thm:dense-are-the-smooth}]
    Given $K\in \S_n$ we may assume without loss of generality that $K\subseteq B_2^n$. 
    We employ the approximation procedure described in \cite[Theorem 3.4.1]{schneider2013convex}, where  $\eps_m>0$ is some sequence with $\eps_m \to 0$. To this end we fix for every $m$ some $\varphi_m:[0,\infty)\to [0,\infty)$ which is $C^\infty$ smooth, has $\int \varphi_m\left(\|z\|\right)dz = 1$,  and is supported on $[\eps_m/2, \eps_m)$, and define the mapping
    \[ T_mf (x) = \int_{\RR^n} f\left(x + \|x\|z\right)\varphi_m\left(\|z\|\right)dz.\]
Theorem 3.4.1 in \cite{schneider2013convex} implies that $T_m(h_{K})$ is the 
    support function of a convex body, which we call $K_m'$, and moreover $h_{K_m'}$
     is $C^\infty$ on $\RR^n\setminus \{0\}$. 
     Moreover (upon identifying the map $T_m$ on support functions and on convex bodies), $d_H(K, T_mK) \le \eps_m$ for all $K\in \S_n$ since $\outrad(K)\le 1$ and using property (c) of \cite[Theorem 3.4.1]{schneider2013convex}. We see also that $T_m(K+L) = T_mK + T_mL$ by definition (this is (a) in \cite[Theorem 3.4.1]{schneider2013convex}), and  that $T_m(B_2^n)$ is a Euclidean ball since it is invariant under rigid motions by property (b) of the same theorem. Denoting $T_m(B_2^n) = \alpha_m B_2^n$, we see that $\alpha_m \in [1-\eps_m, 1+\eps_m]$ since $d_H(\alpha_mB_2^n , B_2^n)\le \eps_m$.   
In particular, if $K+L= B_2^n$ we have $K_m'+T_m(L)= \alpha_m B_2^n$ and thus $\frac{1}{\alpha_m} K_m' + \frac{1}{\alpha_m}T_m(L) = B_2^n$. We let $K_m'' = \frac{1}{\alpha_m} K_m'$, so that 
$d_H(K_m'', K_m') = |1-\frac{1}{\alpha_m}|\sup_{u\in S^{n-1}}|h_{K_m'}(u)|\le |1-\frac{1}{\alpha_m}|\le 2\eps_m$ if we assume $\eps_m<1/2$, which we may. 
We see that $K_m''$ is a summand of $B_2^n$, and its support function is $C^\infty$, since this was the case for $K_m'$. 
The last step in our construction is to ensure that the body has no singular points. This would already imply that the body is $C^\infty$; For the discussion connecting the smoothness of the support function with the smoothness of the body, see \cite[Section 2.5]{schneider2013convex}  where the $C^2_+$ case is considered, but the proof works for any degree of smoothness. See also the discussion after Theorem 3.4.1 in the same book. To this end
we let 
\[
K_m  = (1-\eps_m)K_m'' + \eps_m B_2^n, 
\]
so that 
\[ d_H(K_m'',K_m)= \sup_{u\in S^{n-1}} |h_{K_m}(u) - h_{K_m''}(u)|
= \eps_m d_H(K_m'', B_2^n) \le \eps_m.
\] 
Clearly $K_m$ is a summand of $B_2^n$,  it is $C^\infty_+$, and its support function is also $C^\infty$. We get that  
\[ 
d_H(K_m, K) \le d_H(K_m, K_m'') + d_H(K_m'', K_m')+d_H(K_m',  K)\le 4\eps_m  
\]
and the proof is complete. 
\end{proof}

\bibliographystyle{plain}

{\small
\noindent S. Artstein-Avidan, 
\vskip 2pt
\noindent School of Mathematical Sciences, Tel Aviv University, Ramat
Aviv, Tel Aviv, 69978, Israel.\vskip 2pt
\noindent Email: shiri@tauex.tau.ac.il.
\vskip 2pt
\noindent D.I. Florentin, 
\vskip 2pt
\noindent Department of Mathematics, Bar-Ilan University, Ramat Gan,  52900, Israel.   \vskip 2pt
\noindent Email: dan.florentin@biu.ac.il.
}

\end{document}